\newtheorem{theorem}{Theorem}[section]
\newtheorem{theoremi}{Theorem}
\newtheorem{corollaryi}[theoremi]{Corollary}
\newtheorem{propositioni}[theoremi]{Proposition}
\newtheorem{corollary}[theorem]{Corollary}
\newtheorem{lemma}[theorem]{Lemma}
\newtheorem{proposition}[theorem]{Proposition}
\newtheorem{definition-proposition}[theorem]{Definition-Proposition}
\theoremstyle{definition}
\newtheorem{definition}[theorem]{Definition}
\newtheorem{remark}[theorem]{Remark}
\newtheorem*{ack}{Acknowledgement}
\newtheorem*{conv}{Conventions}
\newtheorem*{org}{Organization}
\newcommand{\qedb}{\hfill\blacksquare}
\newcommand{\CC}{\mathcal{C}}
\newcommand{\FF}{\mathcal{F}}
\renewcommand{\SS}{\mathcal{S}}
\newcommand{\Ext}{\operatorname{Ext}\nolimits}
\newcommand{\op}{\operatorname{op}\nolimits}
\newcommand{\RHom}{\mathbf{R}\strut\kern-.2em\operatorname{Hom}\nolimits}
\newcommand{\Image}{\operatorname{Im}\nolimits}
\newcommand{\Kernel}{\operatorname{Ker}\nolimits}
\newcommand{\Cokernel}{\operatorname{Coker}\nolimits}
\newcommand{\Ab}{\mathcal{A}b}
\newcommand{\coker}{\Cokernel}
\newcommand{\im}{\Image}
\renewcommand{\ker}{\Kernel}
\newcommand{\un}{\underline}
\DeclareMathOperator{\Mod}{\mathsf{Mod}}
\DeclareMathOperator{\coh}{\mathsf{coh}}
\DeclareMathOperator{\id}{\mathsf{id}}
\DeclareMathOperator{\Eff}{\mathsf{Eff}}
\DeclareMathOperator{\ddef}{\mathsf{def}}
\newcommand{\iso}{\cong}
\newcommand{\defl}{\twoheadrightarrow}
\newenvironment{sbmatrix}{\left[\begin{smallmatrix}}{\end{smallmatrix}\right]}
\newcommand{\E}{\mathbb{E}}
\newcommand{\F}{\mathbb{F}}
\renewcommand{\ss}{\mathfrak{s}}
\newcommand{\EE}{\mathcal{E}}
\numberwithin{equation}{section}
\begin{document}
\title[Classifying substructures of extriangulated categories via Serre subcategories]{Classifying substructures of extriangulated categories via Serre subcategories}

\author[H. Enomoto]{Haruhisa Enomoto}

\address{Graduate School of Mathematics, Nagoya University, Chikusa-ku, Nagoya. 464-8602, Japan}
\email{m16009t@math.nagoya-u.ac.jp}
\subjclass[2010]{18E10, 18E30, 18E05}
\keywords{extriangulated categories; closed subbifunctors; defects}
\begin{abstract}
  We give a classification of substructures (= closed subbifunctors) of a given skeletally small extriangulated category by using the category of defects, in a similar way to the author's classification of exact structures of a given additive category. More precisely, for an extriangulated category, possible substructures are in bijection with Serre subcategories of an abelian category consisting of defects of conflations. As a byproduct, we prove that for a given skeletally small additive category, the poset of exact structures on it is isomorphic to the poset of Serre subcategories of some abelian category.
\end{abstract}

\maketitle

\tableofcontents

\section{Introduction}

Recently, Nakaoka and Palu \cite{NP} introduced an \emph{extriangulated category} as a simultaneous
generalization of exact categories and triangulated categories. An extriangulated category consists of a triple $(\CC,\E,\ss)$, where $\CC$ is an additive category, $\E$ is an additive bifunctor $\CC^{\op} \times \CC \to \Ab$, and $\ss$ is so called a \emph{realization} of $\E$, which designates the class of \emph{conflations}.
We call a pair $(\E,\ss)$ an \emph{extriangulated structure} on $\CC$.

As for exact categories, the author gave in \cite{eno} a classification of possible exact structures on a given idempotent complete additive category $\CC$. More precisely, for an exact structure $\EE$ on an idempotent complete additive category $\CC$, we can associate the category of \emph{defects} $\ddef\EE$, which is a subcategory of the category $\Mod\CC$ of $\CC$-modules.
Then it was shown in \cite{eno} that this gives a bijection between the set of possible exact structures on $\CC$ and the set of Serre subcategories of the category of finitely generated $\CC$-modules satisfying a kind of $2$-regular conditions.

The aim of this paper is to give an analogue of this result for extriangulated structures by using \emph{defects for an extriangulated category} introduced by Ogawa \cite{ogawa}. Unfortunately, we cannot provide a full classification of possible extriangulated structures. This task seems to be rather difficult, since such a classification would give a classification of possible triangulated structures. As another reason, in general, there may be several different triangulated structures on a given category, but the category of defects over triangulated categories are all the same. Thus we cannot expect a classification using defects.

Instead, we give a classification of all possible \emph{substructures} of a \emph{fixed} extriangulated structure. More precisely, let $\CC$ be an additive category and $(\E,\ss)$ a fixed extriangulated structure on $\CC$. Then a subbifunctor $\F$ of $\E$ is \emph{closed} if the natural restriction $(\F,\ss|_\F)$ also gives an extriangulated structure (see Definition \ref{def:closed}).
The goal of this paper is to give a classification of all possible closed subbifunctors of a given  extriangulated category $(\CC,\E,\ss)$.

As in the case of exact categories, we can define the category $\ddef\E$ of \emph{defects} for an extriangulated category $(\CC,\E,\ss)$ (see Definition \ref{def:eff}).
The key observation is Ogawa's result \cite{ogawa}, which shows that $\ddef\E$ is a Serre subcategory of an abelian category of finitely presented $\CC$-modules if $\CC$ has a weak kernel. In this paper, we modify his result to contain extriangulated categories without weak kernels as follows:
In stead of considering finitely presented $\CC$-modules, we consider \emph{coherent} $\CC$-modules, that is, a finitely presented modules such that every finitely generated submodule is finitely presented. For any additive category $\CC$, the category $\coh\CC$ of coherent $\CC$-modules are well-known to be abelian (Proposition \ref{prop:cohwide}). Then we can prove the following.
\begin{propositioni}[= Proposition \ref{prop:defserre}]\label{prop:A}
  Let $(\CC,\E,\ss)$ be a skeletally small extriangulated category. Then the category of defect $\ddef\E$ is a Serre subcategory of an abelian category $\coh\CC$.
\end{propositioni}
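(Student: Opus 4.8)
The plan is to identify $\ddef\E$ explicitly as the class of finitely generated subfunctors of the functors $\E(-,A)$, and then to check the three Serre closure conditions against this description, the main tools being the long exact sequences attached to conflations together with the pullback of conflations. For a conflation $A\xrightarrow{x}B\xrightarrow{y}C$ realizing $\delta\in\E(C,A)$, I would first record the exact sequence of $\CC$-modules
\[\CC(-,A)\xrightarrow{x_*}\CC(-,B)\xrightarrow{y_*}\CC(-,C)\xrightarrow{\delta_\sharp}\E(-,A)\xrightarrow{x_*}\E(-,B),\]
where $\delta_\sharp$ sends $c\in\CC(Y,C)$ to the pullback $c^*\delta\in\E(Y,A)$. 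Thus the defect $\delta^*:=\coker(y_*)$ is at once a cokernel of a map between representables, hence finitely presented, and canonically isomorphic to $\im(\delta_\sharp)=\ker(\E(-,A)\xrightarrow{x_*}\E(-,B))$, i.e.\ to the subfunctor of $\E(-,A)$ generated by the single element $\delta$.

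The key lemma is that, conversely, every finitely generated subfunctor $L$ of $\E(-,A)$ is a defect: generators $\epsilon_i\in\E(X_i,A)$ assemble via $\E(\bigoplus_i X_i,A)\cong\prod_i\E(X_i,A)$ into a single $\epsilon\in\E(\bigoplus_i X_i,A)$ generating $L$, and $L=\im(\epsilon_\sharp)$ is then the defect of any conflation realizing $\epsilon$. This identifies $\ddef\E$, up to isomorphism, with the class of finitely generated subfunctors of the various $\E(-,A)$, and it also yields coherence: a defect $\delta^*$ is finitely presented, and each of its finitely generated subfunctors is a finitely generated subfunctor of $\E(-,A)$, hence again a finitely presented defect, so $\delta^*\in\coh\CC$. (This coherence step is exactly what replaces Ogawa's weak-kernel hypothesis.) Closure of $\ddef\E$ under subobjects in $\coh\CC$ is then immediate, since a subobject of a defect $M\subseteq\E(-,A)$ is coherent, hence finitely generated, hence a finitely generated subfunctor of $\E(-,A)$.

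For quotients, let $M=\delta^*\subseteq\E(-,A)$ and $M\twoheadrightarrow N$ in $\coh\CC$ with kernel $K$. By the subobject case $K$ is a defect, so $K$ is generated by a single $\kappa\in\E(C',A)$; as $\kappa\in M(C')=\im(\delta_\sharp)(C')$ we may write $\kappa=c^*\delta$ for some $c\colon C'\to C$. Pulling the conflation back along $c$ gives a conflation $A\xrightarrow{x'}E\to C'$ together with a morphism of conflations, with middle component $b\colon E\to B$, for which $x=bx'$, whence $K=\ker(\E(-,A)\xrightarrow{x'_*}\E(-,E))$. Restricting $x'_*$ to $M$ then exhibits $N=M/K\cong\im(x'_*|_M)$ as a finitely generated subfunctor of $\E(-,E)$, hence a defect.

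Closure under extensions is where I expect the real work. Given $0\to M_1\to M\to M_2\to 0$ in $\coh\CC$ with $M_i=\delta_i^*\subseteq\E(-,A_i)$ coming from conflations $A_i\xrightarrow{x_i}B_i\xrightarrow{y_i}C_i$, the composite $M\twoheadrightarrow M_2\hookrightarrow\E(-,A_2)$ has kernel exactly $M_1$, so it suffices to extend the inclusion $M_1\hookrightarrow\E(-,A_1)$ to a map $\psi\colon M\to\E(-,A_1)$; the resulting $M\to\E(-,A_1)\oplus\E(-,A_2)=\E(-,A_1\oplus A_2)$ is then injective and presents $M$ as a finitely generated subfunctor of $\E(-,A_1\oplus A_2)$, hence a defect. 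Such a $\psi$ exists precisely when the pushout of the given sequence along $M_1\hookrightarrow\E(-,A_1)$ splits, i.e.\ when its class in $\Ext^1_{\Mod\CC}(M_2,\E(-,A_1))$ vanishes. Computing this group from the presentation $\CC(-,A_2)\xrightarrow{(x_2)_*}\CC(-,B_2)\xrightarrow{(y_2)_*}\CC(-,C_2)\to M_2\to 0$ identifies it with the middle cohomology of $\E(C_2,A_1)\xrightarrow{y_2^*}\E(B_2,A_1)\xrightarrow{x_2^*}\E(A_2,A_1)$, which vanishes by the exactness of the contravariant long exact sequence of the conflation $\delta_2$. This interplay between the extriangulated long exact sequences and $\Ext^1$ in $\Mod\CC$ is the crux; granting it, $\ddef\E$ is closed under subobjects, quotients and extensions in $\coh\CC$, and is therefore a Serre subcategory.
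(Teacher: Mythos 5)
Your proof is correct, but it takes a genuinely different route from the paper's. The paper (Lemmas \ref{lem:effserre} and \ref{lem:defeff}) characterizes $\ddef\E$ as $\Eff\E\cap\coh\CC$, where $\Eff\E$ is Ogawa's Serre subcategory of effaceable modules in $\Mod\CC$; the Serre property inside $\coh\CC$ then falls out of this intersection, the real work being the two inclusions (coherence of defects via weak pullbacks of conflations, and the converse via a two-step presentation of an effaceable coherent module by defects). You instead identify $\ddef\E$, via the exact sequence $\CC(-,A)\to\CC(-,B)\to\CC(-,C)\to\E(-,A)\to\E(-,B)$, with the class of finitely generated subfunctors of the functors $\E(-,A)$, and check the three Serre conditions against that description: subobjects and coherence become essentially formal, quotients use the pullback of the conflation along the morphism classifying the kernel, and extension-closure reduces to the vanishing of $\Ext^1_{\Mod\CC}(M_2,\E(-,A_1))$, computed from the projective presentation of $M_2$ and the contravariant half of the six-term sequence. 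Your approach buys a clean structural description of $\ddef\E$ and dispenses with the effaceable machinery, at the cost of needing exactness at the $\E$-terms of the long exact sequences in both variables --- the paper only invokes the shorter sequence of Proposition \ref{prop:longexact}, so you should cite \cite[Corollary 3.12]{NP} for the steps $\im(\delta_\sharp)=\ker\bigl(\E(-,A)\to\E(-,B)\bigr)$ and $\ker(x_2^*)=\im(y_2^*)$ --- together with the small homological computation identifying $\Ext^1_{\Mod\CC}(M_2,-)$ from a length-three exact projective presentation. The paper's route is shorter given Ogawa's results and additionally exhibits $\ddef\E$ as the trace on $\coh\CC$ of a Serre subcategory of all of $\Mod\CC$; yours is more self-contained. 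All the individual steps of your argument check out, in particular the assembly of finitely many generators into a single class over a direct sum, the identification $M/K\cong\im(x'_*|_M)$ as a cyclic subfunctor of $\E(-,E)$, and the splitting criterion for extending $M_1\hookrightarrow\E(-,A_1)$ over $M$.
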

This proposition in particular implies that for a skeletally small exact category $(\CC,\EE)$, the category of defect $\ddef\EE$ is a Serre subcategory of $\coh\CC$, which is a new result in its own.

By using this, we can associate to each closed subbifunctor $\F$ of $\E$ a Serre subcategory $\ddef\F$ of $\coh\CC$, which is also a Serre subcategory of $\ddef\E$. Now we are ready to state the main result of this paper.
\begin{theoremi}\label{thm:main}
  Let $(\CC,\E,\ss)$ be a skeletally small extriangulated category. Then the map $\F \mapsto \ddef\F$ gives an isomorphism of the following posets, where the poset structures are given by inclusion.
  \begin{enumerate}
    \item The poset of closed subbifunctors of $\E$.
    \item The poset of Serre subcategories of $\ddef\E$.
  \end{enumerate}
\end{theoremi}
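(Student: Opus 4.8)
The map $\F \mapsto \ddef\F$ is already well defined and order preserving: by Proposition \ref{prop:defserre} together with the construction preceding the statement, each closed subbifunctor $\F$ yields a Serre subcategory $\ddef\F$ of $\ddef\E$, and if $\F_1 \subseteq \F_2$ then every $\F_1$-conflation is an $\F_2$-conflation, so $\ddef\F_1 \subseteq \ddef\F_2$. The plan is therefore to produce an explicit inverse and to verify that the two assignments are mutually inverse; monotonicity of the inverse will then be automatic, yielding the poset isomorphism.

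The inverse I would construct is $\SS \mapsto \F_\SS$, where for a Serre subcategory $\SS$ of $\ddef\E$ I set
\[
\F_\SS(C,A) := \{\, \delta \in \E(C,A) \mid \ddef\delta \in \SS \,\}.
\]
The basic computational tool is the description of the defect of $\delta \in \E(C,A)$ as the image of the connecting map, $\ddef\delta(X) = \im(\CC(X,C) \to \E(X,A))$, $f \mapsto f^\ast\delta$, realizing $\ddef\delta$ simultaneously as a quotient of $\CC(-,C)$ and a subfunctor of $\E(-,A)$. From this one reads off the functoriality facts I will use repeatedly: pulling back along $c\colon C' \to C$ gives $\ddef(c^\ast\delta) \subseteq \ddef\delta$ (a subobject), pushing out along $a\colon A \to A'$ gives $\ddef(a_\ast\delta) = a_\ast\ddef\delta$ (a quotient), and defects take finite direct sums to finite direct sums. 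Since $\SS$ is closed under subobjects, quotients and direct sums, these facts show $\F_\SS(C,A)$ is a subgroup of $\E(C,A)$ stable under $\E(c,A)$ and $\E(C,a)$ — the Baer sum being handled by pulling back and pushing out $\ddef\delta_1 \oplus \ddef\delta_2$ along the (co)diagonal — so $\F_\SS$ is an additive subbifunctor of $\E$.

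The first real obstacle is to show $\F_\SS$ is \emph{closed}, i.e. that $(\CC,\F_\SS,\ss|_{\F_\SS})$ is extriangulated. Using the characterization of closedness (Definition \ref{def:closed}) — which reduces to the octahedral axioms ET4, ET4$^{\mathrm{op}}$, equivalently to the $\F_\SS$-deflations being closed under composition — the point is to translate a composition of $\F_\SS$-deflations into a short exact sequence of the corresponding defects in the abelian category $\ddef\E$. Granting such a sequence $0 \to \ddef\delta_1 \to \ddef\delta \to \ddef\delta_2 \to 0$, the extension-closure of the Serre subcategory $\SS$ forces $\ddef\delta \in \SS$ as soon as the two outer terms lie in $\SS$, which is exactly what makes the composite an $\F_\SS$-deflation. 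Producing this short exact sequence from the ET4 datum is where the bulk of the work lies.

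Finally I would check the two composites are identities. That $\ddef\F_\SS = \SS$ is straightforward: ``$\subseteq$'' is the definition, and ``$\supseteq$'' holds because any object of $\SS \subseteq \ddef\E$ has the form $\ddef\delta$ for some $\E$-conflation $\delta$, whence $\delta \in \F_\SS$. The substantive identity is $\F_{\ddef\F} = \F$, i.e. the key lemma that membership $\delta \in \F$ is detected by $\ddef\delta \in \ddef\F$; only the implication $\ddef\delta \in \ddef\F \Rightarrow \delta \in \F$ needs proof. For this I would fix an isomorphism $\ddef\delta \cong \ddef\epsilon$ with $\epsilon \in \F$ and lift it through the canonical epimorphisms $\CC(-,C) \twoheadrightarrow \ddef\delta$ and $\CC(-,C_0) \twoheadrightarrow \ddef\epsilon$, using projectivity of the representable functors, so as to realize $\delta$ from $\epsilon$ by pullback and pushout; stability of $\F$ under these operations then gives $\delta \in \F$. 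The delicate point, and the main obstacle of this direction, is that the abstract defect-isomorphism must be made compatible with the contravariant variable (handled on the quotient-of-$\CC(-,C)$ side) and the covariant variable (handled on the sub-of-$\E(-,A)$ side) simultaneously. Once $\F_{\ddef\F} = \F$ and $\ddef\F_\SS = \SS$ are established, the inverse is monotone for free and the two posets are isomorphic.
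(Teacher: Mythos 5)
Your proposal follows the paper's proof essentially step for step: the same inverse $\SS \mapsto \F_\SS$ defined by membership of the defect in $\SS$, the same three verifications (subbifunctor via pullback giving a subobject and pushout giving a quotient of the defect, additivity via direct sums and the (co)diagonal, closedness via an exact sequence of defects together with Serre closure --- noting only that the left-hand term of your short exact sequence is in general a quotient of $\widetilde{\delta_1}$ rather than $\widetilde{\delta_1}$ itself, which is harmless since $\SS$ is Serre), and the same key lemma that an isomorphism $\widetilde{\delta} \cong \widetilde{\delta'}$ with $\delta' \in \F$ forces $\delta \in \F$. One caution on that last lemma: the projectivity lifting (plus (ET3)$^{\op}$) only yields $c^*\delta = a_*\delta'$, so $\delta$ is \emph{not} literally obtained from $\delta'$ by pullback and pushout; the paper closes this exactly with your ``subfunctor of $\E(-,A)$'' picture, identifying $\widetilde{\delta} = \im \delta_\sharp$ with $\im (a_*\delta')_\sharp \subseteq \F(-,A)$ and then evaluating $\delta_\sharp$ at $\id_C$.
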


As a biproduct, we can easily deduce a rather simple classification of exact structures. It was shown by Rump \cite{rump} that any additive category $\CC$ has the maximum exact structure $\EE^{\max}$, that is, every exact structure is contained in $\EE^{\max}$.
\begin{corollaryi}[= Corollary \ref{cor:exact}]\label{cor:C}
  Let $\CC$ be a skeletally small exact category. Then the map $\EE \mapsto \ddef\EE$ gives an isomorphism of the following posets:
  \begin{enumerate}
    \item The poset of exact structures on $\CC$.
    \item The poset of Serre subcategories of an abelian category $\ddef\EE^{\max}$.
  \end{enumerate}
\end{corollaryi}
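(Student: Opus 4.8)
The plan is to deduce Corollary \ref{cor:exact} from Theorem \ref{thm:main} by identifying the exact structures on $\CC$ with the closed subbifunctors of the maximum exact structure. I would first regard the maximum exact structure of Rump \cite{rump} as an extriangulated category $(\CC, \EE^{\max}, \ss^{\max})$; this is legitimate since every exact category is in particular extriangulated \cite{NP}. Proposition \ref{prop:A} then shows that $\ddef\EE^{\max}$ is a Serre subcategory of $\coh\CC$, hence an abelian category, so the second poset in the statement is well defined. Applying Theorem \ref{thm:main} to $(\CC, \EE^{\max}, \ss^{\max})$ gives a poset isomorphism $\F \mapsto \ddef\F$ between the closed subbifunctors of $\EE^{\max}$ and the Serre subcategories of $\ddef\EE^{\max}$.

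The heart of the argument is to show that, inside the poset of subbifunctors of $\EE^{\max}$ ordered by inclusion, the exact structures on $\CC$ are precisely the closed subbifunctors of $\EE^{\max}$. For one direction, if $\EE$ is an exact structure on $\CC$, then $\EE \le \EE^{\max}$ by maximality, and the restricted pair $(\EE, \ss^{\max}|_{\EE})$ is exactly the extriangulated structure underlying the exact category $(\CC, \EE)$; thus $\EE$ is closed in the sense of Definition \ref{def:closed}. For the converse, let $\F \le \EE^{\max}$ be a closed subbifunctor, so that $(\CC, \F, \ss^{\max}|_{\F})$ is an extriangulated category whose conflations form a subclass of the $\EE^{\max}$-conflations. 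Since $(\CC, \EE^{\max})$ is an exact category, every $\EE^{\max}$-inflation is a monomorphism and every $\EE^{\max}$-deflation is an epimorphism, and these properties are inherited by $\F$-inflations and $\F$-deflations. By the characterization of exact categories as those extriangulated categories whose inflations are monic and whose deflations are epic \cite{NP}, the substructure $\F$ is an exact structure on $\CC$.

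Composing this order-preserving identification with the isomorphism from Theorem \ref{thm:main}, and noting that the defect $\ddef\EE$ of an exact structure, viewed as an extriangulated structure via Definition \ref{def:eff}, agrees with its defect as an exact category, yields the desired isomorphism $\EE \mapsto \ddef\EE$ of posets. I expect the main obstacle to be the converse inclusion above: one must check that closedness of a subbifunctor of an exact structure forces it to be exact rather than merely extriangulated, which relies on confirming that the mono/epi nature of inflations and deflations, equivalently the kernel--cokernel property of conflations, passes to every substructure, together with the precise extriangulated characterization of exact categories.
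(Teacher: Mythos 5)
Your proposal is correct and follows essentially the same route as the paper: the paper's Lemma \ref{lem:compati} is precisely your identification of exact structures contained in a given exact structure with the closed subbifunctors of the corresponding extriangulated structure (proved, as you do, via the mono/epi characterization of Proposition \ref{prop:exchar}), which is then specialized to $\EE^{\max}$ and composed with Theorem \ref{thm:main}. The converse inclusion you flag as the main obstacle is handled in the paper exactly as you propose.
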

The poset of exact structures was also studied in \cite{BHLR}. This corollary was recently obtained by \cite[Theorem 2.8]{FG} under the assumption of idempotent completeness, whose proof uses results in \cite{eno}. Our proof uses Theorem \ref{thm:main}, is a rather simple, and does not use any results in \cite{eno}.

\begin{org}
  This paper is organized as follows.
  In Section \ref{sec2}, we recall some basic properties of extriangulated categories, and study the category of defects to show Proposition \ref{prop:A}.
  In Section \ref{sec3}, we give a proof of Theorem \ref{thm:main}.
  In Section \ref{sec4}, we discuss applications to exact structures, and prove Corollary \ref{cor:C}.
\end{org}

\begin{conv}
  Throughout this paper, we assume that \emph{all categories and functors are additive} and \emph{all subcategories are additive, full and closed under isomorphisms}.
\end{conv}

\section{Preliminaries}\label{sec2}
In this section, we give basic properties of extriangulated categories, and then investigate the category of defects.

\subsection{Basic properties of extriangulated categories}
In this paper, we omit the detailed axioms and definitions on extriangulated categories, and only give terminologies and properties which we shall need later. For the precise definition and the detailed properties of extriangulated categories, we refer the reader to \cite{NP,LN}.

Let $\CC$ be an additive category. Consider the following data $(\E,\ss)$.
\begin{itemize}
  \item $\E$ is an additive bifunctor $\E \colon \CC^{\op} \times \CC \to \Ab$, where $\Ab$ denotes the category of abelian groups.
  \item $\ss$ is a correspondence which associate to each $\delta \in \E(C,A)$ an equivalence class of complexes $[A \to B \to C]$ in $\CC$.
\end{itemize}
Here two complexes $[A \xrightarrow{x} B \xrightarrow{y} C]$ and $[A \xrightarrow{x'} B' \xrightarrow{y'} C ]$ are \emph{equivalent} if there is an isomorphism $b \colon B \to B'$ which makes the following diagram commute:
\[
\begin{tikzcd}
  A \dar[equal]\rar["x"] & B \rar["y"] \dar["b"', "\sim"] & C \dar[equal]\\
  A \rar["x'"'] & B' \rar["y'"'] & C
\end{tikzcd}
\]
For such a data $(\E,\ss)$, we call a complex $A \xrightarrow{x} B \xrightarrow{y} C$ an \emph{$\ss$-conflation} if its equivalence class is equal to $\ss(\delta)$ for some $\delta \in \E(C,A)$. In this case, we call $x$ an \emph{$\ss$-inflation} and $y$ an \emph{$\ss$-deflation}, and we say that this complex \emph{realizes} $\delta$. We write this situation as follows:
\[
\begin{tikzcd}
  A \rar["x"] & B \rar["y"] & C \rar[dashed, "\delta"] & \
\end{tikzcd}
\]

Let $\delta \in \E(C,A)$ and $A \xrightarrow{a} A'$ and $C' \xrightarrow{c} C$ two morphisms in $\CC$. Then we have an element $\E(c,a)(\delta) \in \E(C',A')$. We often write $\E(\id_C,a)(\delta) = a_* \delta$ and $\E(c,\id_A) = c^* \delta$. Then the functoriality of $\E$ implies $\E(c,a)(\delta) = a_* (c^* \delta) = c^* (a_* \delta)$.

Let $\delta_i \in \E(C,A)$ for $i=1,2$. Then a \emph{morphism} $\delta_1 \to \delta_2$ consists of a pair of morphisms $(a,c)$ with $ A_1 \xrightarrow{a} A_2$ and $C_1 \xrightarrow{c} C_2$ satisfying $a_* \delta_1 = c^* \delta_2$.
We say that $\ss$ is a \emph{realization of $\E$} if for every morphism $(a,c) \colon \delta_1 \to \delta_2$ with $\delta_i \in \E(C_i,A_i)$, there is a commutative diagram
\[
\begin{tikzcd}
  A_1 \rar["x_1"] \dar["a"']& B_1 \rar["y_1"] \dar["b"]& C_1 \dar["c"] \rar[dashed, "\delta_1"] & \ \\
  A_2 \rar["x_2"'] & B_2 \rar["y_2"'] & C_2 \rar[dashed, "\delta_2"'] & \ \\
\end{tikzcd}
\]
such that the top complex (resp. the bottom complex) is a realization of $\delta_1$ (resp. $\delta_2$). In this case, we call $(a,b,c)$ a \emph{morphism of $\ss$-conflations}, or a \emph{realization of $(a,c) \colon \delta_1 \to \delta_2$}. Whenever we write diagrams like this, we require that $(a,b,c)$ is a realization of a morphism $(a,c) \colon \delta_1 \to \delta_2$.

Now suppose that $\ss$ is a realization of $\EE$. An \emph{extriangulated category} consists of a triple $(\CC,\E,\ss)$ satisfying some axioms, for which we refer the reader to \cite{NP}. In this case, we call $(\E,\ss)$ an \emph{ on $\CC$}.

We recall some properties of extriangulated categories. Let $(\CC,\E,\ss)$ be an extriangulated category.
One of the axiom is used later:
\begin{itemize}
  \item[(ET3)$^{\op}$] Let $A_i \xrightarrow{x_i} B_i \xrightarrow{y_i} C_i$ be a realization of $\delta_i \in \E(C_i,A_i)$ for $i=1,2$, and suppose that we have the following commutative diagram in $\CC$:
  \[
  \begin{tikzcd}
    A_1 \rar["x_1"] \dar[dashed] & B_1 \rar["y_1"] \dar["b"]& C_1 \dar["c"] \rar[dashed, "\delta_1"] & \ \\
    A_2 \rar["x_2"'] & B_2 \rar["y_2"'] & C_2 \rar[dashed, "\delta_2"'] & \ \\
  \end{tikzcd}
  \]
  Then there exists a morphism $(a,c)\colon \delta_1 \to \delta_2$ which is realized by $(a,b,c)$.
\end{itemize}

The following is also a basic property of extriangulated categories.
\begin{proposition}[{\cite[Proposition 3.3]{NP}}]\label{prop:longexact}
  Let $(\CC,\E,\ss)$ be an extriangulated category, and suppose that we have the following morphism of $\ss$-conflations:
  \[
  \begin{tikzcd}
    A_1 \rar["x_1"] \dar["a"'] & B_1 \rar["y_1"] \dar["b"]& C_1 \dar["c"] \rar[dashed, "\delta_1"] & \ \\
    A_2 \rar["x_2"'] & B_2 \rar["y_2"'] & C_2 \rar[dashed, "\delta_2"'] & \ \\
  \end{tikzcd}
  \]
  Then the following diagram is exact and commutative,
  \[
  \begin{tikzcd}
    \CC(-,A_1) \rar["{\CC(-,x_1)}"] \dar["{\CC(-,a)}"'] & \CC(-,B_1) \rar["{\CC(-,y_1)}"] \dar["{\CC(-,b)}"]& \CC(-,C_1) \dar["{\CC(-,c)}"] \rar["(\delta_1)_\sharp"] & \E(-,A_1) \dar["{\E(-,a)}"]\\
    \CC(-,A_2) \rar["{\CC(-,x_2)}"'] & \CC(-,B_2) \rar["{\CC(-,y_2)}"'] & \CC(-,C_2) \rar[dashed, "(\delta_2)_\sharp"'] & \E(-,A_2) \\
  \end{tikzcd}
  \]
  where $(\delta_i)_\sharp$ for $i=1,2$ is a map corresponding to $\delta_i \in \E(C_i,A_i)$ via the Yoneda lemma.
\end{proposition}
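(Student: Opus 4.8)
The plan is to treat the two assertions separately: commutativity of the two squares is formal, whereas exactness of the rows is where the extriangulated axioms are used. Throughout, recall that by the Yoneda lemma $(\delta)_\sharp\colon \CC(-,C)\to\E(-,A)$ is the natural transformation determined by $(\delta)_\sharp(f)=f^*\delta=\E(f,\id_A)(\delta)$ for $f\colon W\to C$.

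For commutativity I would first dispose of the two left squares: by the functoriality of the Yoneda embedding they amount to the identities $bx_1=x_2a$ and $cy_1=y_2b$, which are exactly the commuting squares of the given morphism of $\ss$-conflations. For the right square I would evaluate the two composites on an arbitrary $f\colon W\to C_1$; one gives $a_*(f^*\delta_1)=f^*(a_*\delta_1)$ and the other gives $(cf)^*\delta_2=f^*(c^*\delta_2)$, and these coincide precisely because $(a,c)\colon\delta_1\to\delta_2$ is a morphism, i.e.\ $a_*\delta_1=c^*\delta_2$. In other words, this square is the naturality square of the Yoneda correspondence.

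For exactness it suffices to treat a single $\ss$-conflation $A\xrightarrow{x}B\xrightarrow{y}C$ realizing $\delta$, since each row is such an instance, and to verify exactness objectwise at a test object $W$. At $\CC(-,B)$ the inclusion $\im\CC(-,x)\subseteq\ker\CC(-,y)$ is merely $yx=0$; for the reverse inclusion, given $f\colon W\to B$ with $yf=0$ I would apply (ET3)$^{\op}$ to the split $\ss$-conflation $W\xrightarrow{\id_W}W\to 0$ and to $A\xrightarrow{x}B\xrightarrow{y}C$, along the square determined by $f\colon W\to B$ and $0\colon 0\to C$, obtaining $h\colon W\to A$ with $xh=f$. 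At $\CC(-,C)$ the key preliminary is that $y^*\delta=0$: applying (ET3)$^{\op}$ to the split $\ss$-conflation $A\xrightarrow{\binom{1}{0}}A\oplus B\xrightarrow{(0\ 1)}B$ and to $\delta$, along the square given by $y\colon B\to C$ and $(x\ \id_B)\colon A\oplus B\to B$, produces a morphism $(\alpha,y)\colon 0\to\delta$, which unwinds to $y^*\delta=0$; hence $(\delta)_\sharp\circ\CC(-,y)=0$. For the reverse inclusion $\ker(\delta)_\sharp\subseteq\im\CC(-,y)$, given $f\colon W\to C$ with $f^*\delta=0$ I would realize the morphism $(\id_A,f)\colon f^*\delta\to\delta$ by a morphism of $\ss$-conflations whose top row $A\to E\xrightarrow{y'}W$ realizes $f^*\delta=0$; since a conflation realizing $0$ is split, $y'$ admits a section $s$, and composing $s$ with the middle vertical morphism gives $g\colon W\to B$ with $yg=f$.

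The hard part will be the exactness at $\CC(-,C)$, and specifically extracting from the axioms the two facts that carry the argument: that a conflation realizing $0$ is split, so that the section $s$ exists, and that the required lifting along a morphism of extensions is available. Once $y^*\delta=0$ and this splitting are in hand, everything else reduces to formal manipulation using $\E(c,a)=a_*c^*=c^*a_*$ and the Yoneda lemma; I expect the commutativity part to present no difficulty.
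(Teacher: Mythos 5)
Your argument is correct, but note that the paper does not prove this statement at all: it is imported verbatim from \cite[Proposition 3.3]{NP}, so there is no internal proof to compare against. What you have written is essentially a self-contained reconstruction of the Nakaoka--Palu argument from the axioms that the paper does record. The commutativity part is indeed formal (the right square is exactly the condition $a_*\delta_1=c^*\delta_2$ built into the notion of a morphism of $\ss$-conflations, read through the Yoneda lemma). Your two applications of (ET3)$^{\op}$ are correctly set up: for exactness at $\CC(-,B)$ the square $y\circ f=0\circ 0$ commutes by hypothesis, and for $y^*\delta=0$ the square $y\circ(0\ \ 1)=y\circ(x\ \ \id_B)$ commutes because $yx=0$, which holds since realizations are equivalence classes of \emph{complexes}. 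The only inputs you use that the paper's summary of the definition leaves implicit are (i) that $0\in\E(C,A)$ is realized by the split complex $A\to A\oplus C\to C$ (hence your split conflations $W\xrightarrow{\id}W\to 0$ and $A\to A\oplus B\to B$ exist, and a conflation realizing $0$ admits a section of its deflation), and (ii) the existence of a realization of the morphism $(\id_A,f)\colon f^*\delta\to\delta$, which is precisely the paper's definition of $\ss$ being a realization of $\E$. Both are part of the Nakaoka--Palu axioms, so the proof goes through; the trade-off is simply that you prove from scratch what the paper treats as a black-box citation.
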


For an extriangulated category $(\CC,\E,\ss)$, the notion of \emph{closed subbifunctors of $\E$} was introduced in \cite{HLN,INP}.
\begin{definition}\label{def:closed}
  Let $(\CC,\E,\ss)$ be an extriangulated category. Then an additive subbifunctor $\F$ of $\E$ is called \emph{closed} if $\ss|_\F$-deflations are closed under compositions.
\end{definition}
Actually, the original definition is different from those in \cite{HLN,INP}, but is equivalent to ours by \cite[Proposition 5.5]{INP}. The most important property of closed subbifunctors is the following.
\begin{proposition}[{\cite[Proposition 5.5]{INP}}]\label{prop:closed}
  Let $(\CC,\E,\ss)$ be an extriangulated category and $\F$ an additive subbifunctor of $\E$. Then the following are equivalent.
  \begin{enumerate}
    \item $\F$ is closed.
    \item $(\F,\ss|_\F)$ gives an extriangulated structure of $\CC$.
  \end{enumerate}
\end{proposition}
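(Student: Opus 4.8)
The plan is to prove the two implications separately, since $(2)\Rightarrow(1)$ is immediate while $(1)\Rightarrow(2)$ is the substantial direction. For $(2)\Rightarrow(1)$, I would simply invoke the general fact that in \emph{any} extriangulated category the deflations are closed under composition (an immediate consequence of the axiom (ET4)$^{\op}$, which feeds two composable deflations into a conflation whose deflation is their composite). Applying this to $(\F,\ss|_\F)$ shows that $\ss|_\F$-deflations are closed under composition, i.e.\ that $\F$ is closed.

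For $(1)\Rightarrow(2)$ I would verify the extriangulated axioms for $(\F,\ss|_\F)$, and my first observation is that every axiom \emph{not} involving composition is inherited from $(\E,\ss)$ using only that $\F$ is an additive subbifunctor. Indeed (ET1) is the biadditivity of $\F$, which is assumed; for (ET2) the $\ss|_\F$-conflations are by definition exactly the $\ss$-conflations realizing elements of $\F$, so $\ss|_\F$ is an additive realization because $\ss$ is one, $\F$ contains the split extensions ($0\in\F$), and $\F$ is closed under direct sums. For (ET3) and (ET3)$^{\op}$, given the relevant commutative square between two $\ss|_\F$-conflations I would apply the corresponding axiom for $\E$ to obtain a morphism $(a,c)$ of the underlying extensions realized by $(a,b,c)$; since $\delta_1,\delta_2\in\F$ and $\F$ is a subbifunctor, the elements $a_*\delta_1$ and $c^*\delta_2$ already lie in $\F$ and the equality $a_*\delta_1=c^*\delta_2$ holds in $\E$, so $(a,c)$ is a morphism in $\F$ realized by $(a,b,c)$. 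None of this uses closedness.

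The first place closedness enters is (ET4)$^{\op}$. Starting from two composable $\ss|_\F$-conflations, I would apply (ET4)$^{\op}$ for the ambient $(\E,\ss)$ to produce the octahedral diagram together with a new extension $\delta''\in\E$. Every extension occurring in that diagram other than $\delta''$ is a pushforward or pullback of $\delta_1$ or $\delta_2$, hence automatically lies in $\F$, so the only point to check is $\delta''\in\F$. Now the deflation realizing $\delta''$ is precisely the composite of the two given $\ss|_\F$-deflations, so by closedness it is again an $\ss|_\F$-deflation, i.e.\ the deflation of some conflation realizing an element $\eta\in\F$. To compare $\delta''$ and $\eta$ I would use that in an extriangulated category an inflation is the kernel of its deflation (inflations are monic, and Proposition \ref{prop:longexact} makes them weak kernels). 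Hence two conflations with the same deflation have canonically isomorphic inflations, and the comparison isomorphism $a$ furnishes, via (ET3)$^{\op}$, a morphism of conflations $(a,\id,\id)$ with $\eta=a_*\delta''$; since $a$ is invertible and $\F$ is a subbifunctor, $\delta''\in\F$, establishing (ET4)$^{\op}$.

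It remains to treat (ET4), and this is where I expect the main obstacle. By the formal dual of the previous paragraph, (ET4) for $(\F,\ss|_\F)$ would follow if $\ss|_\F$-\emph{inflations} were closed under composition. This is the dual of the closedness hypothesis, but it is not literally what is assumed, and the closure of deflations under composition does not visibly yield closure of inflations by a direct argument: in the relevant octahedral diagram the map whose $\F$-membership one must control appears as a \emph{factor} of a known $\ss|_\F$-deflation rather than as a composite of such. The hard part will therefore be the self-dual-looking equivalence, for a subbifunctor of an extriangulated category, between ``deflations closed under composition'' and ``inflations closed under composition'' --- the extriangulated analogue of Quillen's obscure axiom. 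I would establish it by a diagram chase that genuinely uses both (ET4) and (ET4)$^{\op}$ of the ambient $\E$ to build the required pullback/pushout squares; this is the technical heart of the statement (and the content imported from \cite{INP}), after which (ET4) for $(\F,\ss|_\F)$ follows exactly as (ET4)$^{\op}$ did.
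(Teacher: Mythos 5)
A preliminary remark: the paper offers no proof of this proposition at all --- it is imported verbatim from \cite[Proposition 5.5]{INP} --- so your attempt can only be measured against that source. Your skeleton agrees with it: $(2)\Rightarrow(1)$ is immediate from the fact that deflations compose in any extriangulated category; (ET1), (ET2), (ET3) and (ET3)$^{\op}$ hold for $(\F,\ss|_\F)$ for \emph{any} additive subbifunctor (your verification of these is correct); (ET4)$^{\op}$ for $(\F,\ss|_\F)$ reduces to closure of $\ss|_\F$-deflations under composition; and (ET4) reduces to closure of $\ss|_\F$-inflations under composition. One local error in your (ET4)$^{\op}$ step: you justify comparing two conflations with the same deflation by claiming that ``an inflation is the kernel of its deflation'' because ``inflations are monic.'' This is false in a general extriangulated category --- in a triangulated category the inflation of a conflation is almost never a monomorphism. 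The comparison isomorphism you want does exist, but the correct route is (ET3)$^{\op}$ applied to the square with $b=\id$ and $c=\id$, which yields $a$ with $a_*\delta''=\eta$, together with the fact that if two of the three components of a realized morphism of $\E$-extensions are isomorphisms then so is the third (\cite[Corollary 3.6]{NP}, or a five-lemma argument on the long exact sequences extending Proposition \ref{prop:longexact}).

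The substantive problem is that your proof stops exactly where the content begins. Having correctly reduced (ET4) to the statement that $\ss|_\F$-inflations are closed under composition, you observe that this does not follow formally from the hypothesis that $\ss|_\F$-deflations are closed under composition, call it ``the technical heart,'' and then assert you ``would establish it by a diagram chase.'' That equivalence --- the extriangulated analogue of the obscure axiom, for an additive subbifunctor --- \emph{is} the proposition; everything else in your write-up is routine bookkeeping that both you and \cite{INP} dispatch in a few lines. The actual argument in \cite{INP} proceeds through an intermediate characterization of closedness (exactness properties of $\F(C,-)$ and $\F(-,A)$ on $\ss$-conflations, extracted from the long exact sequences) and genuinely uses both (ET4) and (ET4)$^{\op}$ of the ambient structure; none of that is reproduced or even sketched in your proposal. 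As it stands, the proposal is an accurate outline with the decisive step missing, so it does not constitute a proof.
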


We state the following properties of extriangulated categories which will be needed later.
\begin{lemma}\label{lem:tec}
  Let $(\CC,\E,\ss)$ be an extriangulated category. Suppose that we have the following morphism of $\ss$-conflations.
  \[
  \begin{tikzcd}
    A' \rar["x'"]\dar["a"'] & B' \rar["y'"]\dar["b"] & C' \dar["c"] \rar[dashed, "\delta'"] & \ \\
    A \rar["x"'] & B \rar["y"'] & C \rar[dashed, "\delta"'] & \
  \end{tikzcd}
  \]
  Then there exist the following morphisms of $\ss$-conflations,
  \[
  \begin{tikzcd}
    A' \ar[rd, phantom]\rar["x'"]\dar["a"'] & B' \rar["y'"]\dar["b_1"] & C' \dar[equal] \rar[dashed,"\delta'"] & \ \\
    A \rar \dar[equal] & D \ar[rd,phantom]\rar \dar["b_2"] & C' \dar["c"] \rar[dashed, "\delta''"] & \ \\
    A \rar["x"'] & B \rar["y"'] & C \rar[dashed, "\delta"'] & \
  \end{tikzcd}
  \]
  where $\delta'' := c^* \delta = a_* \delta'$.
\end{lemma}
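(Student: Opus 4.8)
The plan is to apply the defining property of a realization twice, once along $a$ and once along $c$, inserting the extension $\delta''$ as the common value $a_* \delta' = c^* \delta$ in the middle row. First I would record that the hypothesis that $(a,b,c)$ is a morphism of $\ss$-conflations means, by definition, that $(a,c) \colon \delta' \to \delta$ is a morphism of extensions, i.e. $a_* \delta' = c^* \delta$. I set $\delta'' := a_* \delta' = c^* \delta \in \E(C',A)$, which already justifies the last clause of the statement. Since $\ss$ is a realization of $\E$, the element $\delta''$ admits a realization $A \xrightarrow{x''} D \xrightarrow{y''} C'$, and this will be the middle row of the desired diagram.

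Next I would produce the two rows as follows. The pair $(a,\id_{C'})$ is a morphism of extensions $\delta' \to \delta''$, because $a_* \delta' = \delta'' = (\id_{C'})^* \delta''$; applying the realization property to it yields a morphism of $\ss$-conflations $(a,b_1,\id_{C'}) \colon \delta' \to \delta''$, which is the upper half of the diagram. Dually, $(\id_A,c)$ is a morphism of extensions $\delta'' \to \delta$, since $(\id_A)_* \delta'' = \delta'' = c^* \delta$; applying the realization property again gives a morphism of $\ss$-conflations $(\id_A,b_2,c) \colon \delta'' \to \delta$, the lower half. Stacking these two morphisms of conflations produces exactly the claimed diagram; note that no compatibility with the original middle map $b$ is asserted, so this assembly is unconstrained.

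The only point requiring care, which I regard as the main (though minor) obstacle, is that the realization property as stated only furnishes a morphism of conflations between \emph{some} realizations of the source and target, whereas here I want the top row to be the \emph{given} realization of $\delta'$ and the middle row to be the realization of $\delta''$ chosen above (and symmetrically for the lower step). This is resolved by recalling that any two realizations of a fixed extension are equivalent through an isomorphism of their middle terms that is the identity on the outer objects; pre- and post-composing the morphism supplied by the axiom with these boundary equivalences leaves the outer components $a,\id_{C'},\id_A,c$ untouched and transports it onto the prescribed rows. Should one additionally want the two inserted squares to be a pushout and a pullback respectively, as the corner markers in the diagram suggest, this is not needed for the existence statement and can be read off from the standard description of such squares in \cite{NP}.
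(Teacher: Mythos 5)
Your proposal is correct and follows essentially the same route as the paper: define $\delta'' := a_*\delta' = c^*\delta$, realize it, observe that $(a,\id_{C'})\colon \delta'\to\delta''$ and $(\id_A,c)\colon \delta''\to\delta$ are morphisms of extensions, and realize both (your indices here are in fact more carefully written than the paper's). Your extra remark about transporting the axiom's output onto the prescribed representatives via the equivalence of realizations is a harmless refinement the paper leaves implicit.
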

\begin{proof}
  By assumption we have $c^* \delta = a_* \delta' \in \E(C',A)$ and put this element $\delta''$. Take a realization $A \to D \to C'$ of $\delta''$. Then we have a morphisms $(c,\id_A) \colon \delta' \to \delta''$ and $(\id_C,a) \colon \delta'' \to \delta$. By taking realizations of these morphisms, we obtain the desired result.
\end{proof}

\begin{lemma}\label{lem:ln}
  Let $(\CC,\E,\ss)$ be an extriangulated category and $A \xrightarrow{x} B \xrightarrow{y} C$ be an $\ss$-conflation which realizes $\delta \in \E(C,A)$. Then for any $C' \xrightarrow{c} C$ in $\CC$, there exists the following morphism of $\ss$-conflations which realizes $(\id_A,c) \colon c^* \delta \to \delta$
  \[
  \begin{tikzcd}
    A \rar["x'"] \dar[equal] & B' \rar["y'"]\dar["b"'] \ar[rd,phantom, "(*)"]& C' \dar["c"] \rar[dashed, "{c^* \delta}"] &\ \\
    A \rar["x"'] & B \rar["y"'] & C \rar[dashed, "\delta"'] & \
  \end{tikzcd}
  \]
  such that $(*)$ is a weak pullback.
\end{lemma}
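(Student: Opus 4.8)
The plan is to first produce the morphism of $\ss$-conflations from the defining property of a realization, and then to verify the weak pullback property of $(*)$ by a diagram chase through Proposition \ref{prop:longexact}.

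For the construction, I note that $(\id_A, c)$ is a morphism $c^*\delta \to \delta$ in the sense defined above, since the required equality $(\id_A)_*(c^*\delta) = c^*\delta = c^*(\delta)$ holds tautologically. As $\ss$ is a realization of $\E$, this morphism is realized by some $(\id_A, b, c)$; the source conflation of such a realization is then an $\ss$-conflation $A \xrightarrow{x'} B' \xrightarrow{y'} C'$ of $c^*\delta$ fitting into the displayed diagram, and it satisfies $b x' = x$ and $y b = c y'$. This settles the existence of the morphism.

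To see that $(*)$ is a weak pullback, I apply Proposition \ref{prop:longexact} to the morphism $(\id_A, b, c)$. Since the left vertical map is $\id_A$, the rightmost vertical map $\E(-, \id_A)$ is the identity, and I obtain a commutative diagram with exact rows
\[
\begin{tikzcd}
  \CC(-,A) \rar["{\CC(-,x')}"] \dar[equal] & \CC(-,B') \rar["{\CC(-,y')}"] \dar["{\CC(-,b)}"] & \CC(-,C') \dar["{\CC(-,c)}"] \rar["{(c^*\delta)_\sharp}"] & \E(-,A) \dar[equal] \\
  \CC(-,A) \rar["{\CC(-,x)}"'] & \CC(-,B) \rar["{\CC(-,y)}"'] & \CC(-,C) \rar["{\delta_\sharp}"'] & \E(-,A)
\end{tikzcd}
\]
Now fix $T \in \CC$ together with $f \colon T \to B$ and $g \colon T \to C'$ satisfying $y f = c g$; I must construct $h \colon T \to B'$ with $b h = f$ and $y' h = g$. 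Evaluating the rightmost square at $T$ and using $c g = y f$ gives $(c^*\delta)_\sharp(g) = \delta_\sharp(c g) = \delta_\sharp(y f) = 0$, the last equality because $y f \in \im \CC(T,y) = \ker \delta_\sharp$ by exactness of the bottom row. Hence $g \in \ker (c^*\delta)_\sharp = \im \CC(T, y')$ by exactness of the top row, so there is $h_0 \colon T \to B'$ with $y' h_0 = g$.

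It remains to correct $h_0$ so that it also lifts $f$. From $y(b h_0) = c (y' h_0) = c g = y f$ I get $b h_0 - f \in \ker \CC(T,y) = \im \CC(T,x)$, so $b h_0 - f = x t$ for some $t \colon T \to A$. Putting $h := h_0 - x' t$, the identities $b x' = x$ and $y' x' = 0$ give $b h = b h_0 - x t = f$ and $y' h = g$, which is exactly the desired (not necessarily unique) lift. The construction in the second paragraph is a direct application of the axioms, so the only real content is this chase; the single subtle point will be the final adjustment of $h_0$ by the term $x' t$, which is what makes one morphism compatible with both projections at once.
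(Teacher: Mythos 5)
Your argument is correct, but it takes a different route from the paper: the paper disposes of this lemma in one line by citing \cite[Proposition 1.20]{LN}, whereas you reprove that result from scratch. Your construction of the morphism from the realization axiom is fine (one tiny point you gloss over: the axiom produces a diagram whose bottom row is only \emph{some} representative of the class $\ss(\delta)$, so to land on the given conflation $A \to B \to C$ you should compose with the equivalence isomorphism of middle terms, which changes nothing essential). The weak pullback verification via Proposition \ref{prop:longexact} is exactly the standard diagram chase --- lift $g$ through the exactness of the top row at $\CC(-,C')$, then correct the lift by a term factoring through $x'$ using exactness of the bottom row at $\CC(-,B)$ --- and every exactness statement you invoke is indeed supplied by that proposition applied to the morphism $(\id_A,b,c)$. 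What the paper's citation buys is brevity and a pointer to the general statement in \cite{LN}; what your version buys is a self-contained proof that makes visible why the weak pullback property is really a shadow of the long exact sequence of representable functors, with the only genuinely nontrivial step being the final adjustment $h = h_0 - x't$, which you identify correctly.
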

\begin{proof}
  This immedately follows from \cite[Proposition 1.20]{LN}.
\end{proof}

\subsection{Category of coherent modules and defects}
In this subsection, we first give basic properties of the category of coherent modules over a skeletally small additive category, and then give a modification of Ogawa's result on the category of defects over an extriangulated category.

Let $\CC$ be a skeletally small additive category. Then a \emph{right $\CC$-module} is a contravariant functor $\CC^{\op} \to \Ab$ to the category of abelian groups, and a morphism between right $\CC$-modules is just a natural transformation of functors. By this, all right $\CC$-modules form a Grothendieck abelian category $\Mod\CC$ with enough projectives, and projective objects are direct summands of (possibly infinite) direct sums of representable functors.

Now we introduce several finiteness conditions on $\CC$-modules.
\begin{definition}
  Let $\CC$ be a skeletally small additive category and $M \in \Mod\CC$.
  \begin{enumerate}
    \item $M$ is \emph{finitely generated} if there is a surjection $\CC(-,C) \defl M$ for some $C \in \CC$.
    \item $M$ is \emph{finitely presented} if there is an exact sequence of the following form.
    \[
    \begin{tikzcd}
      \CC(-,B) \rar & \CC(-,C) \rar & M \rar & 0
    \end{tikzcd}
    \]
    \item $M$ is \emph{coherent} if $M$ is finitely presented and every finitely generated submodule of $M$ is finitely presented. We denote by $\coh \CC$ the category of coherent right $\CC$-modules.
  \end{enumerate}
\end{definition}
The following property is well-known. For the proof, see \cite[Proposition 1.5]{herzog} for example.
\begin{proposition}\label{prop:cohwide}
  Let $\CC$ be a skeletally small additive category. Then $\coh\CC$ is a wide subcategory of $\Mod\CC$, that is, $\coh\CC$ is closed under kernels, cokernels and extensions in $\Mod\CC$. In particular, $\coh\CC$ is an exact abelian subcategory of $\Mod\CC$.
\end{proposition}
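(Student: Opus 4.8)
The plan is to reduce everything to a few standard finiteness facts in $\Mod\CC$ and then bootstrap to coherence. Throughout I would use that $\Mod\CC$ has enough projectives, that each representable $\CC(-,C)$ is projective, and that finite direct sums of representables are again representable (since $\CC$ is additive, $\CC(-,C_1\oplus C_2)\iso\CC(-,C_1)\oplus\CC(-,C_2)$). This last point lets me run all the usual module-theoretic arguments with representables in place of finitely generated projectives, \emph{without} assuming $\CC$ is idempotent complete, which is exactly where one must be careful.

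First I would record the elementary closure properties: finitely generated modules are closed under quotients, finite direct sums and extensions, and the image of a morphism of finitely generated modules is finitely generated. The key tool for the finitely presented case is Schanuel's lemma, valid in any abelian category with enough projectives: for two surjections $P\defl M$ and $Q\defl M$ from projectives one has $\ker(P\defl M)\oplus Q\iso\ker(Q\defl M)\oplus P$. From this I obtain that the kernel of any surjection from a representable onto a finitely presented module is finitely generated, and then, by the usual horseshoe and comparison arguments, that finitely presented modules are closed under cokernels and extensions in $\Mod\CC$.

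The one genuinely load-bearing lemma is: in a short exact sequence $0\to A\to B\to C\to 0$ with $B$ finitely generated and $C$ finitely presented, $A$ is finitely generated. I would prove it by choosing a surjection $P\defl B$ with $P$ representable, composing to a surjection $P\defl C$ whose kernel is finitely generated by the Schanuel consequence above, and observing that this kernel surjects onto $A$. With this in hand the rest is bookkeeping. A finitely generated submodule $N$ of a coherent module $M$ is again coherent: it is finitely presented by definition, and its finitely generated submodules are finitely generated submodules of $M$, hence finitely presented. For \emph{cokernels}, if $f\colon M\to N$ is a morphism of coherent modules then $\im f$ is finitely generated, hence finitely presented, so $\coker f$ is finitely presented, and a finitely generated submodule of $\coker f=N/\im f$ lifts to a finitely generated submodule of $N$ containing $\im f$, which is finitely presented; quotienting by $\im f$ keeps it finitely presented. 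For \emph{kernels}, the sequence $0\to\ker f\to M\to\im f\to 0$ exhibits $\ker f$ as the kernel of a surjection of finitely presented modules, so $\ker f$ is finitely generated by the load-bearing lemma, hence a finitely generated submodule of the coherent module $M$, hence coherent. For \emph{extensions} $0\to M'\to M\to M''\to 0$ with $M',M''$ coherent, $M$ is finitely presented; and for finitely generated $L\subseteq M$ the sequence $0\to L\cap M'\to L\to L''\to 0$ with $L''=\im(L\to M'')$ finitely presented shows, again by the load-bearing lemma, that $L\cap M'$ is finitely generated, hence finitely presented as a finitely generated submodule of coherent $M'$, so $L$ is an extension of finitely presented modules and therefore finitely presented.

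The main obstacle, as indicated, is setting up the load-bearing lemma and Schanuel's lemma so that only representables (and their finite direct sums, which remain representable) ever serve as the chosen projective covers; without idempotent completeness a finitely generated projective need not be representable, so the arguments must be phrased to avoid passing to direct summands. Once this is done, closure under kernels, cokernels and extensions follows formally, and being a kernel- and cokernel-closed full subcategory makes $\coh\CC$ abelian with the inherited exact structure, giving the ``in particular'' statement.
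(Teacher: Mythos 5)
Your proof is correct. The paper itself gives no argument for this proposition, citing it as well known (Herzog, Proposition 1.5), and your write-up is a sound self-contained version of that standard argument: Schanuel's lemma plus the lemma that in $0\to A\to B\to C\to 0$ with $B$ finitely generated and $C$ finitely presented the kernel $A$ is finitely generated, with the correct care taken to use only representables (and their finite direct sums, which stay representable since $\CC$ is additive) so that idempotent completeness is never needed.
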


Next we introduce \emph{defects} of $\ss$-conflations following \cite{ogawa}.
\begin{definition}[{\cite[Definition 2.4]{ogawa}}]\label{def:eff}
  Let $(\CC,\E,\ss)$ be a skeletally small extriangulated category.
  \begin{enumerate}
    \item Let $\delta \in \E(C,A)$. Then take a realization $A \to B \xrightarrow{y} C$ of $\delta$, and define $\widetilde{\delta}$ as the cokernel of $\CC(-,y) \colon \CC(-,B) \to \CC(-,C)$ in $\Mod\CC$.
    We call $\widetilde{\delta}$ a \emph{defect of $\delta$}, or a \emph{defect of an $\ss$-conflation $A \to B \xrightarrow{y} C$}.
    \item We denote by $\ddef\E$ the subcategory of $\Mod\CC$ consisting of $\CC$-modules which are isomorphic to defects of some $\ss$-conflations.
  \end{enumerate}
\end{definition}

Now the following is a key observation in this paper. Recall that a \emph{Serre subcategory} of an abelian category is a subcategory which is closed under taking subobjects, quotients and extensions.
\begin{proposition}\label{prop:defserre}
  Let $(\CC,\E,\ss)$ be a skeletally small extriangulated category. Then $\ddef\E$ is a Serre subcategory of $\coh\E$.
\end{proposition}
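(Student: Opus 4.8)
The plan is to verify directly the defining closure properties of a Serre subcategory of $\coh\CC$: that every defect is coherent (so $\ddef\E \subseteq \coh\CC$), and that $\ddef\E$ is closed under subobjects, quotients and extensions. The single most useful observation throughout is that, applying Proposition \ref{prop:longexact} to the identity morphism of a conflation $A \to B \xrightarrow{y} C$ realizing $\delta$, the sequence $\CC(-,B) \xrightarrow{\CC(-,y)} \CC(-,C) \xrightarrow{(\delta)_\sharp} \E(-,A)$ is exact; hence the defect $\widetilde\delta = \coker\CC(-,y)$ is canonically isomorphic to the image $\im((\delta)_\sharp) \subseteq \E(-,A)$. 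I will also use three standard facts about the ambient structure $\ss$: finite direct sums of conflations are conflations, isomorphisms and split epimorphisms are deflations, and deflations are closed under composition (see \cite{NP}). Together these show that any ``upper triangular'' matrix whose diagonal entries are deflations is itself a deflation.

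First I would treat subobjects, which simultaneously yields coherence. Let $\widetilde\delta$ be a defect and $L \subseteq \widetilde\delta$ a finitely generated submodule, with a surjection $\CC(-,C') \defl L$. Since $\CC(-,C')$ is projective and $\pi\colon\CC(-,C) \defl \widetilde\delta$ is surjective, the composite $\CC(-,C') \to L \hookrightarrow \widetilde\delta$ lifts through $\pi$ to a morphism $\CC(-,c)\colon \CC(-,C') \to \CC(-,C)$ for some $c\colon C' \to C$. Using the identification above together with the naturality relation $(c^*\delta)_\sharp = (\delta)_\sharp\circ\CC(-,c)$, one checks $L = \im((\delta)_\sharp\circ\CC(-,c)) = \im((c^*\delta)_\sharp) = \widetilde{c^*\delta}$, where $c^*\delta$ is realized by the pullback conflation supplied by Lemma \ref{lem:ln}. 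Thus $L$ is again a defect. Since every defect is finitely presented (being a cokernel of a map of representables), this shows that every finitely generated submodule of a defect is finitely presented, so defects are coherent and $\ddef\E \subseteq \coh\CC$; it also gives closure under subobjects inside $\coh\CC$, because kernels in $\coh\CC$ are finitely generated by Proposition \ref{prop:cohwide}.

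For quotients, let $\widetilde\delta \defl N$ with $N \in \coh\CC$ and set $K := \ker(\widetilde\delta \to N)$, a finitely generated submodule of $\widetilde\delta$; by the previous step $K = \im(\pi\circ\CC(-,c))$ for some $c\colon C' \to C$. A short computation then identifies $N$ with $\coker(\CC(-,[y\ c]))$, where $[y\ c]\colon B\oplus C' \to C$; since $y$ is a deflation, so is $[y\ c]$, whence $N$ is the defect of the conflation with deflation $[y\ c]$. For extensions, let $0 \to L \to M \to N \to 0$ be exact in $\coh\CC$ with $L = \widetilde{\delta_1}$ and $N = \widetilde{\delta_2}$ the defects of conflations with deflations $y_1\colon B_1 \to C_1$ and $y_2\colon B_2 \to C_2$. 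The horseshoe lemma produces a projective presentation $\CC(-,B_1\oplus B_2) \xrightarrow{\CC(-,u)} \CC(-,C_1\oplus C_2) \to M \to 0$ with $u = \left[\begin{smallmatrix} y_1 & w \\ 0 & y_2\end{smallmatrix}\right]$ for some $w\colon B_2 \to C_1$. As $u$ is upper triangular with deflations on the diagonal it is a deflation, so $M = \coker\CC(-,u)$ is a defect. This completes all four verifications.

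The routine parts are the diagram chases identifying $L$, $N$ and $M$ with the displayed cokernels. The real content, and the step I expect to require the most care, is the construction of the witnessing conflations: everything reduces to the claim that matrices assembled from deflations via direct sums, composition with isomorphisms, and upper triangular composition are again deflations. This in turn rests on the fact that deflations in $(\CC,\E,\ss)$ are closed under composition — a genuine input from the extriangulated axioms (the analogue of the octahedral axiom) rather than a formal consequence of $\E$ being a bifunctor — together with the pullback of Lemma \ref{lem:ln} used in the subobject step.
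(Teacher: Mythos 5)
Your proof is correct, but it takes a genuinely different route from the paper's. The paper does not check the Serre axioms one by one; it instead proves the identity $\ddef\E = \Eff\E \cap \coh\CC$ (Lemma \ref{lem:defeff}) and then combines two soft facts: $\Eff\E$ is a Serre subcategory of all of $\Mod\CC$ (Lemma \ref{lem:effserre}, an elementary diagram chase with effaceability) and $\coh\CC$ is wide in $\Mod\CC$. The only place the paper builds a conflation is in the coherence half of Lemma \ref{lem:defeff}, which is essentially your subobject step — though the paper verifies exactness of the lifted presentation using the weak pullback of Lemma \ref{lem:ln}, whereas you identify $\widetilde{\delta}$ with $\im(\delta_\sharp)\subseteq\E(-,A)$ via Proposition \ref{prop:longexact} and conclude $L\cong\widetilde{c^*\delta}$ directly (an identification the paper itself uses later in Lemma \ref{lem:key}); the reverse inclusion $\Eff\E\cap\coh\CC\subseteq\ddef\E$ is handled by resolving an effaceable coherent module by defects and quoting closure under cokernels from Ogawa. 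Your argument replaces the effaceability formalism with explicit witnessing conflations: $[y\ c]$ for quotients, and the horseshoe presentation by $u=\begin{sbmatrix} y_1 & w \\ 0 & y_2\end{sbmatrix}$ for extensions, with $u$ a deflation via the factorization $\begin{sbmatrix}1 & 0 \\ 0 & y_2\end{sbmatrix}\begin{sbmatrix}1 & w \\ 0 & 1\end{sbmatrix}\begin{sbmatrix}y_1 & 0 \\ 0 & 1\end{sbmatrix}$ — all legitimate, resting as you say on (ET4) and the additivity of the realization. What each buys: yours is self-contained and exhibits concrete conflations realizing every closure operation; the paper's is shorter given Ogawa's lemmas and additionally records the equality $\ddef\E=\Eff\E\cap\coh\CC$, which connects the statement to the weak-kernel version in \cite{ogawa}.
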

If $\E$ has weak kernels, then $\coh\E$ coincides with the category of finitely presented $\CC$-modules. Thus this recovers \cite[Proposition 2.5]{ogawa}.

We prepare some lemmas to prove this proposition.
\emph{In the rest of this subsection this section, we denote by $(\CC,\E,\ss)$ a skeletally small extriangulated category.}

We have the following basic property of $\ddef\E$.
\begin{lemma}\label{lem:defclosed}
  The category $\ddef\E$ is closed under kernels and cokernels in $\Mod\CC$.
\end{lemma}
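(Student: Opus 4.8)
The plan is to reduce the statement to the behaviour of defects under the morphisms of $\ss$-conflations supplied by Lemma \ref{lem:tec}. Concretely, given a morphism $f\colon\widetilde{\delta_1}\to\widetilde{\delta_2}$ in $\Mod\CC$ between defects of $\ss$-conflations $A_i\to B_i\xrightarrow{y_i}C_i$ realizing $\delta_i\in\E(C_i,A_i)$, I would first realize $f$ by an honest morphism of $\ss$-conflations, then split that morphism into an epimorphism followed by a monomorphism via Lemma \ref{lem:tec}, and finally identify the kernel of the epic part and the cokernel of the monic part as defects.

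For the realization step, write $p_i\colon\CC(-,C_i)\defl\widetilde{\delta_i}$ for the defining surjections. Since $\CC(-,C_1)$ is projective, the composite $f\circ p_1$ lifts through $p_2$, and by Yoneda this lift is $\CC(-,c)$ for some $c\colon C_1\to C_2$ with $p_2\circ\CC(-,c)=f\circ p_1$. The key point is then to produce $a\colon A_1\to A_2$ with $a_*\delta_1=c^*\delta_2$. Using $p_1\circ\CC(-,y_1)=0$ together with the inclusion $\widetilde{\delta_2}\hookrightarrow\E(-,A_2)$ coming from Proposition \ref{prop:longexact}, one checks that $(c^*\delta_2)_\sharp\circ\CC(-,y_1)=0$, which by Yoneda says exactly that $y_1^*(c^*\delta_2)=0$ in $\E(B_1,A_2)$; feeding this into the exact sequence $\CC(A_1,A_2)\to\E(C_1,A_2)\xrightarrow{y_1^*}\E(B_1,A_2)$ (a dual of Proposition \ref{prop:longexact}, cf.\ \cite[Proposition 3.3]{NP}) yields the desired $a$. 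Thus $(a,c)\colon\delta_1\to\delta_2$ is a morphism, and any realization $(a,b,c)$ induces $f$ on defects, since the induced map $g$ satisfies $g\circ p_1=p_2\circ\CC(-,c)=f\circ p_1$ with $p_1$ epic.

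Applying Lemma \ref{lem:tec} to $(a,b,c)$ factors it through a middle $\ss$-conflation $A_2\xrightarrow{x'}D\xrightarrow{y_D}C_1$ realizing $\delta'':=c^*\delta_2=a_*\delta_1$, so on defects $f$ becomes $\widetilde{\delta_1}\xrightarrow{e}\widetilde{\delta''}\xrightarrow{m}\widetilde{\delta_2}$. Here $e$ is epic (its right-hand column is $\id_{C_1}$) and $m$ is monic (its left-hand column is $\id_{A_2}$, and $\widetilde{\delta''}=\im(\delta'')_\sharp\subseteq\im(\delta_2)_\sharp=\widetilde{\delta_2}$ as subobjects of $\E(-,A_2)$). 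Hence $\ker f=\ker e$ and $\coker f=\coker m$, and it suffices to show both are defects. For $\coker m$, a direct computation identifies it with $\coker\CC(-,[y_2\ c])$, where $[y_2\ c]\colon B_2\oplus C_1\to C_2$. Since $y_2$ is an $\ss$-deflation and $\ss$-deflations are closed under composition, the factorization $[y_2\ c]=\pi_{C_2}\circ\begin{sbmatrix}y_2 & c\\ 0 & \id\end{sbmatrix}$, with the middle map an isomorphism composed with $y_2\oplus\id_{C_1}$, exhibits $[y_2\ c]$ as an $\ss$-deflation; hence $\coker f\in\ddef\E$.

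For $\ker e$, using $\ker\CC(-,y_D)=\im\CC(-,x')$ and $y_1=y_D b_1$, I would identify $\ker e=\im\CC(-,y_D)/\im\CC(-,y_1)$ with $\coker\CC(-,[b_1\ x'])$ for the map $[b_1\ x']\colon B_1\oplus A_2\to D$. Because $\delta''=a_*\delta_1$ is the pushout of $\delta_1$ along $a$, the dual of Lemma \ref{lem:ln} (that is, \cite[Proposition 1.20]{LN}) provides an $\ss$-conflation $A_1\to B_1\oplus A_2\xrightarrow{[b_1\ x']}D\dashrightarrow$, so $[b_1\ x']$ is an $\ss$-deflation and $\ker f=\ker e\in\ddef\E$. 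I expect the realization step to be the main obstacle: converting the purely module-theoretic morphism $f$ into a morphism of $\ss$-conflations hinges on the vanishing $y_1^*(c^*\delta_2)=0$ and on invoking the correct long exact sequence to recover $a$; once this is in place, the epi–mono factorization and the two deflation computations are routine.
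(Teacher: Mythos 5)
Your proof is correct and takes essentially the same route as the argument the paper defers to, namely \cite[Lemma 2.6]{ogawa}: realize the morphism of defects by a morphism of $\ss$-conflations, factor it through the conflation realizing $a_*\delta_1=c^*\delta_2$ via Lemma \ref{lem:tec}, and identify the kernel and cokernel with the defects of the $\ss$-deflations $[b_1\ x']$ and $[y_2\ c]$. The only point worth making explicit is that the $b_1$ appearing in the middle row should be chosen to be the one supplied by the dual of \cite[Proposition 1.20]{LN} (or one observes that $\ker e\cong\coker\CC(-,[b_1\ x'])$ is independent of this choice, since it always equals $\im\CC(-,y_D)/\im\CC(-,y_1)$), so that $A_1\to B_1\oplus A_2\xrightarrow{[b_1\ x']}D$ is indeed an $\ss$-conflation.
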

\begin{proof}
  The same argument as in \cite[Lemma 2.6]{ogawa} applies, so we omit the proof.
\end{proof}

The notion of \emph{effaceable $\CC$-modules} is useful, which has a close relation to defects.
\begin{definition}[{\cite[(2.1)]{ogawa}}]
  Let $M \in \Mod\CC$. Then we say that $M$ is \emph{effaceable with respect to $\E$} if the following condition $(*)$ is satisfied:
  \begin{itemize}
    \item[$(*)$] For every $C \in \CC$ and $m \in M(C)$, there exists an $\ss$-deflation $B \xrightarrow{y} C$ such that $M(y)(m) = 0 \in M(B)$.
  \end{itemize}
  We denote by $\Eff\F$ the subcategory of $\Mod\CC$ consists of effaceable $\CC$-modules with respect to $\E$.
\end{definition}
We have the following basic property of $\Eff\F$.
\begin{lemma}\label{lem:effserre}
  The category $\Eff\F$ is a Serre subcategory of $\Mod\CC$.
\end{lemma}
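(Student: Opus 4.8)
The plan is to verify the three defining closure properties of a Serre subcategory directly from condition $(*)$, namely closure under subobjects, quotients, and extensions in $\Mod\CC$. Throughout I would use that $\Mod\CC$ is a functor category, so that short exact sequences and (co)kernels are computed objectwise in $\Ab$; in particular a monomorphism (resp. epimorphism) of $\CC$-modules is objectwise injective (resp. surjective). Combined with the naturality of all the transformations involved, the subobject and quotient cases are routine diagram chases.

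For subobjects, suppose $N \subseteq M$ with $M \in \Eff\F$, with inclusion $i\colon N \to M$. Given $C \in \CC$ and $n \in N(C)$, I view $i_C(n) \in M(C)$ and choose an $\ss$-deflation $y \colon B \to C$ with $M(y)(i_C(n)) = 0$. Naturality gives $i_B(N(y)(n)) = M(y)(i_C(n)) = 0$, and since $i_B$ is injective we get $N(y)(n) = 0$; thus $N \in \Eff\F$. For quotients, suppose $\pi \colon M \defl P$ with $M \in \Eff\F$. Given $C \in \CC$ and $p \in P(C)$, I lift $p$ to some $m \in M(C)$ using objectwise surjectivity of $\pi$, choose an $\ss$-deflation $y \colon B \to C$ with $M(y)(m) = 0$, and conclude $P(y)(p) = \pi_B(M(y)(m)) = 0$ by naturality; thus $P \in \Eff\F$.

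The main point is closure under extensions. Let $0 \to L \xrightarrow{i} M \xrightarrow{\pi} N \to 0$ be exact in $\Mod\CC$ with $L, N \in \Eff\F$. Fix $C \in \CC$ and $m \in M(C)$; I must produce a single $\ss$-deflation into $C$ that annihilates $m$. First, effaceability of $N$ applied to $\pi_C(m)$ yields an $\ss$-deflation $y_1 \colon B_1 \to C$ with $N(y_1)(\pi_C(m)) = 0$. By naturality $\pi_{B_1}(M(y_1)(m)) = 0$, so objectwise exactness at $M$ gives $M(y_1)(m) = i_{B_1}(\ell)$ for some $\ell \in L(B_1)$. Next, effaceability of $L$ applied to $\ell$ yields an $\ss$-deflation $y_2 \colon B_2 \to B_1$ with $L(y_2)(\ell) = 0$. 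The crucial step is to pass to the composite $y_1 y_2 \colon B_2 \to C$: since $\ss$-deflations in an extriangulated category are closed under composition, $y_1 y_2$ is again an $\ss$-deflation, and by functoriality of $M$ together with naturality of $i$,
\[
M(y_1 y_2)(m) = M(y_2)(M(y_1)(m)) = M(y_2)(i_{B_1}(\ell)) = i_{B_2}(L(y_2)(\ell)) = 0.
\]
Hence $M \in \Eff\F$, completing the verification. (Closure under the zero object, finite direct sums and isomorphisms is then automatic, the first two from the extension case.)

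I expect the only nontrivial input to be the composability of $\ss$-deflations, which is precisely where the extriangulated structure (rather than mere additivity of $\CC$) is used: without it the two deflations $y_1$ and $y_2$ produced from $N$ and $L$ could not be merged into a single deflation annihilating $m$. Everything else is a diagram chase exploiting that exactness in $\Mod\CC$ is tested objectwise.
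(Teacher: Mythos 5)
Your proof is correct and is essentially the standard argument: the paper itself omits the proof, deferring to Ogawa's Lemma 2.3, which runs exactly this objectwise diagram chase, with the composability of $\ss$-deflations (axiom (ET4)) as the only extriangulated input in the extension step. Nothing to add.
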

\begin{proof}
  The same argument as in \cite[Lemma 2.3]{ogawa} applies, so we omit the proof.
\end{proof}

The key lemma for Proposition \ref{prop:defserre} is the following, which describes the relation between defects and effaceable modules.
\begin{lemma}\label{lem:defeff}
  We have $\ddef\F = \Eff\F \cap \coh\CC$.
\end{lemma}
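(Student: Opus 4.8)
The plan is to prove the two inclusions $\ddef\F\subseteq\Eff\F\cap\coh\CC$ and $\Eff\F\cap\coh\CC\subseteq\ddef\F$ separately, working directly with $\ss$-conflations and $\ss$-deflations. For the first inclusion, fix a defect $\widetilde\delta=\coker(\CC(-,y))$ arising from an $\ss$-conflation $A\xrightarrow{x}B\xrightarrow{y}C$ realizing $\delta$. Effaceability is a direct pointwise chase: since cokernels in $\Mod\CC$ are computed objectwise, an element $m\in\widetilde\delta(C')$ is represented by some $c\colon C'\to C$, and applying Lemma \ref{lem:ln} to $c$ yields an $\ss$-deflation $y'\colon B'\to C'$ together with $b\colon B'\to B$ satisfying $cy'=yb$; then $\widetilde\delta(y')(m)$ is the class of $cy'=yb\in\im\CC(B',y)$, which is $0$, so $\widetilde\delta\in\Eff\F$.

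For coherence, note $\widetilde\delta$ is finitely presented by construction, so it suffices to show every finitely generated submodule $N\subseteq\widetilde\delta$ is finitely presented. Applying Proposition \ref{prop:longexact} to the identity endomorphism of the conflation gives the exact sequence $\CC(-,B)\xrightarrow{\CC(-,y)}\CC(-,C)\xrightarrow{\delta_\sharp}\E(-,A)$, hence an identification $\widetilde\delta\cong\im\delta_\sharp\subseteq\E(-,A)$ under which the projection $\CC(-,C)\twoheadrightarrow\widetilde\delta$ becomes $\delta_\sharp$. Given $N$ finitely generated, choose a surjection $\CC(-,C')\twoheadrightarrow N\hookrightarrow\widetilde\delta$ and lift it, by projectivity of $\CC(-,C')$, along the epimorphism $\CC(-,C)\twoheadrightarrow\widetilde\delta$ to a morphism $g\colon C'\to C$ (Yoneda), so that $N=\im(\delta_\sharp\circ\CC(-,g))$. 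Applying Lemma \ref{lem:ln} to $g$ and Proposition \ref{prop:longexact} to the resulting morphism of conflations identifies $\delta_\sharp\circ\CC(-,g)$ with $(g^*\delta)_\sharp$, whence $N\cong\im(g^*\delta)_\sharp\cong\widetilde{g^*\delta}$, a defect and therefore finitely presented. I expect this step --- recognizing every finitely generated subobject of a defect as itself a defect via pullback along $g$ --- to be the main obstacle, since it is where the extriangulated structure does the essential work; effaceability and the reverse inclusion are comparatively formal.

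For the reverse inclusion, take $M\in\Eff\F\cap\coh\CC$; coherence (in particular finite presentation) gives $\CC(-,B)\xrightarrow{\CC(-,w)}\CC(-,C)\xrightarrow{p}M\to 0$ with $w\colon B\to C$ by Yoneda, so $\ker p=\im\CC(-,w)$. Applying effaceability to the generator $p(\id_C)\in M(C)$ produces an $\ss$-deflation $y_0\colon B_0\to C$ with $p(y_0)=0$, i.e. $y_0=wh_0$ for some $h_0\colon B_0\to B$. The crux is that $g:=[\,w\ \ y_0\,]\colon B\oplus B_0\to C$ is again an $\ss$-deflation: the composite $[\,0\ \ y_0\,]=y_0\circ\pi_2$ is a deflation (a split epimorphism followed by the deflation $y_0$, and deflations are closed under composition), and precomposing it with the automorphism $\left[\begin{smallmatrix}\id&0\\ h_0&\id\end{smallmatrix}\right]$ yields exactly $[\,w\ \ y_0\,]$ because $y_0=wh_0$. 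Finally $\im\CC(-,y_0)\subseteq\im\CC(-,w)$ forces $\im\CC(-,g)=\im\CC(-,w)$, so $M=\coker\CC(-,g)$ is precisely the defect of the $\ss$-conflation ending in the $\ss$-deflation $g$, proving $M\in\ddef\F$. Note this direction uses only that $M$ is finitely presented, coherence being needed solely to guarantee $\ddef\F\subseteq\coh\CC$ in the first inclusion.
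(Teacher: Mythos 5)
Your overall strategy is sound, and your treatment of the inclusion $\ddef\F\subseteq\Eff\F\cap\coh\CC$ is essentially the paper's: the paper delegates effaceability to Ogawa and verifies exactness of the pulled-back presentation of a finitely generated submodule directly from the weak pullback in Lemma \ref{lem:ln}, whereas you identify that submodule with $\im\,(g^*\delta)_\sharp\cong\widetilde{g^*\delta}$ via Proposition \ref{prop:longexact}; both work. For the reverse inclusion you take a genuinely different route. The paper covers $M$ by a defect $\widetilde{\delta}$, covers the finitely generated effaceable kernel $K$ by a second defect, and concludes by closure of the defect category under cokernels (Lemma \ref{lem:defclosed}); you instead modify the presenting morphism $w$ to a single $\ss$-deflation $g=[\,w\ \ y_0\,]$ with the same image, which is more economical and bypasses Lemma \ref{lem:defclosed} entirely --- \emph{provided} the claim that $g$ is an $\ss$-deflation is justified.

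That claim is true, but your proof of it does not work. You have $h_0\colon B_0\to B$ with $y_0=wh_0$. An automorphism $\phi=\begin{sbmatrix}\al&\be\\ \ga&\de\end{sbmatrix}$ of $B\oplus B_0$ satisfies $[\,0\ \ y_0\,]\circ\phi=[\,y_0\ga\ \ y_0\de\,]$, so producing $[\,w\ \ y_0\,]$ this way requires $y_0\ga=w$ for some $\ga\colon B\to B_0$, i.e.\ that $w$ factor through $y_0$; effaceability gives only the opposite factorization $y_0=wh_0$, and the matrix $\begin{sbmatrix}\id&0\\ h_0&\id\end{sbmatrix}$ you write does not typecheck as an endomorphism of $B\oplus B_0$ since its lower-left entry must be a map $B\to B_0$. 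The legitimate automorphism $\begin{sbmatrix}\id&h_0\\ 0&\id\end{sbmatrix}$ instead gives $[\,w\ \ 0\,]\circ\begin{sbmatrix}\id&h_0\\ 0&\id\end{sbmatrix}=[\,w\ \ y_0\,]$, which is useless because $[\,w\ \ 0\,]=w\pi_1$ is not known to be a deflation. The correct (standard) argument is the factorization $[\,w\ \ y_0\,]=[\,w\ \ \id_C\,]\circ(\id_B\oplus y_0)$: the map $\id_B\oplus y_0$ is an $\ss$-deflation by additivity of the realization, and $[\,w\ \ \id_C\,]\colon B\oplus C\to C$ is an $\ss$-deflation because the complex $B\to B\oplus C\to C$ with inclusion $\begin{sbmatrix}\id\\ -w\end{sbmatrix}$ and projection $[\,w\ \ \id\,]$ is equivalent to the split realization of $0\in\E(C,B)$; then (ET4) closes the composite. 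Note that the relation $y_0=wh_0$ is irrelevant to $g$ being a deflation --- it is needed only for $\im\CC(-,g)=\im\CC(-,w)$, which you do use correctly. With this one step repaired, your argument is complete.
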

\begin{proof}
  We first show $\ddef\F \subset \Eff\F \cap \coh\CC$. The inclusion $\ddef\F \subset \Eff\F$ follows from the same argument as in the latter half of the proof of \cite[Proposition 2.5]{ogawa}, so we omit it.

  Let us show $\ddef\F \subset \coh\CC$. Let $M \in \ddef\F$. Then there is an $\ss$-conflation $A \xrightarrow{x} B \xrightarrow{y} C$ which realizes $\delta \in \E(C,A)$ and satisfies $\coker\CC(-,y) \iso M$. In particular, $M$ is finitely presented.

  Take a finitely generated submodule $L$ of $M$. Since $L$ is finitely generated, there is a surjection $\CC(-,C') \defl L$. Thus we obtain the following exact commutative diagram in $\Mod\CC$.
  \[
  \begin{tikzcd}
    & & \CC(-,C') \rar\dar[dashed] & L \rar\dar[hookrightarrow] & 0 \\
    \CC(-,A) \rar["{\CC(-,x)}"]  & \CC(-,B) \rar["{\CC(-,y)}"]  & \CC(-,C) \rar & M \rar & 0
  \end{tikzcd}
  \]
  Here the dashed map exists by the projectivity of $\CC(-,C')$. Then by the Yoneda lemma, there is a corresponding map $c \colon C' \to C$ in $\CC$.
  Now consider $c^* \delta \in \CC(C',A)$. By Lemma \ref{lem:ln}, we have the following realization of a morphism $(c,\id_A) \colon c^*\delta \to \delta$,
  \[
  \begin{tikzcd}
    A \rar["x'"] \dar[equal]& B' \ar[rd,phantom, "(*)"]\rar["y'"] \dar["b"']& C' \dar["c"] \rar[dashed, "c^* \delta"] & \ \\
    A \rar["x_2"'] & B \rar["y_2"'] & C \rar[dashed, "\delta"'] & \ \\
  \end{tikzcd}
  \]
  such that $(*)$ is a weak pullback.
  Now by the Yoneda embedding, we obtain the following commutative diagram in $\Mod\CC$.
  \[
  \begin{tikzcd}
    \CC(-,B') \rar["{\CC(-,y')}"] \dar["{\CC(-,b)}"']& \CC(-,C') \rar\dar["{\CC(-,c)}"] & L \rar\dar[hookrightarrow] & 0 \\
    \CC(-,B) \rar["{\CC(-,y)}"']  & \CC(-,C) \rar & M \rar & 0
  \end{tikzcd}
  \]
  Using the fact that $(*)$ is a weak pullback, we can check that the top sequence above is exact. Therefore, $L$ belongs to $\ddef\CC$.

  Next we show $\Eff\F \cap \coh\CC \subset \ddef\F$. This part is a modification of the first half of the proof of \cite[Proposition 2.5]{ogawa}.
  Take a coherent $\CC$-module $M$ which is effaceable with respect to $\F$. In particular, since $M$ is finitely generated, there is a surjection $\CC(-,C) \defl M$. By the Yoneda lemma, we can identify this map with an element $m \in M(C)$. Since $M$ is effaceable, there is an $\ss$-deflation $B \xrightarrow{y} C$ satisfying $M(y)(m)=0$. Consider the following exact commutative diagram.
  \[
  \begin{tikzcd}
    \CC(-,B) \rar["{\CC(-,y)}"] \ar[rd,"0"'] & \CC(-,C) \rar \dar[twoheadrightarrow, "m"] & \widetilde{\delta} \ar[dl,twoheadrightarrow, dashed] \rar & 0 \\
    & M
  \end{tikzcd}
  \]
  Here the composition $m \circ \CC(-,y)$ is zero by $M(y)(m)=0$. Therefore there is a dashed map which makes the above diagram commute. Now we obtain the following exact sequence in $\Mod\CC$:
  \[
  \begin{tikzcd}
    0 \rar & K \rar & \widetilde{\delta} \rar & M \rar & 0
  \end{tikzcd}
  \]
  Since $M$ is finitely presented and $\widetilde{\delta}$ is finitely generated, we have that $K$ is finitely generated (see e.g. \cite{herzog}). Moreover, since $\ddef\E \subset \Eff\F$ and $\Eff\F$ is closed under subobjects in $\Mod\CC$ by Lemma \ref{lem:effserre}, we have $K \in \Eff\F$.
  Now the same argument shows that we have a surjection $\widetilde{\delta'}\defl K$ with $\widetilde{\delta'} \in \ddef\CC$.
  Thus $M$ is a cokernel of a morphism $\widetilde{\delta'} \to \widetilde{\delta}$, thus belongs to $\ddef\CC$ since $\ddef\CC$ is closed under cokernels in $\Mod\CC$ by Lemma \ref{lem:defclosed}.
\end{proof}

Now Lemmas \ref{lem:effserre} and \ref{lem:defeff} immediately imply Proposition \ref{prop:defserre}.
\begin{proof}[Proof of Proposition \ref{prop:defserre}]
  We have $\ddef\CC = \coh\CC \cap \Eff\CC$ by Lemma \ref{lem:defeff}, and $\Eff\CC$ is a Serre subcategory of $\Mod\CC$ by Lemma \ref{lem:effserre}. These facts immediately imply that $\ddef\CC$ is a Serre subcategory of $\coh\CC$.
\end{proof}

\section{Proof of the main theorem}\label{sec3}
In this section, we will give a proof of Theorem \ref{thm:main}.
\emph{Throughout this section, we denote by $(\CC,\E,\ss)$ a skeletally small extriangulated category.}
In this setting, $\ddef\E$ is an abelian category since it is a Serre subcategory of an abelian category $\coh\CC$ by Proposition \ref{prop:defserre}.

We will construct two maps between (1) and (2) in Theorem \ref{thm:main}, that is:
\begin{enumerate}
  \item The set of closed subbifunctors of $\E$.
  \item The set of Serre subcategories of $\ddef\E$.
\end{enumerate}

{\bf A map $\ddef(-)$ from (1) to (2)}.

Let $\F$ be a closed subbifunctor of $\E$. Then $(\F,\ss|_\F)$ is an extriangulated structure by Proposition \ref{prop:closed}. Thus $\ddef\F$ is a Serre subcategory of $\coh\CC$ by Proposition \ref{prop:defserre}.
Since every $\F$-deflation is an $\E$-deflation, $\ddef\F$ is a subcategory of $\ddef\E$. Thus clearly $\ddef\F$ is a Serre subcategory of an abelian category $\ddef\E$, thus we obtain a map $\ddef(-)$ from (1) to (2).

{\bf A map $\F(-)$ from (2) to (1)}.

Let $\SS$ be a Serre subcategory  of $\ddef\E$, and we will construct an additive subbifunctor $\F(\SS)$ of $\E$. Since $\ddef\E$ is a Serre subcategory of $\coh\CC$ by Proposition \ref{prop:defserre}, it immediately follows that $\SS$ is also a Serre subcategory of $\coh\CC$.

Let $A$ and $C$ be objects in $\CC$ and $\delta \in \E(C,A)$. Then we denote by $\F(\SS)(C,A)$ a subset of $\E(C,A)$ consisting of $\delta$ such that its defect $\widetilde{\delta}$ belongs to $\SS$.

We claim that $\F(\SS)$ actually defines a closed subbifunctor of $\E$. We divide its proof into 3 steps.

\un{{\bf (Step 1)} $\F$ is a subbifunctor of $\E$.} Let $\delta \in \F(\SS)(C,A)$ and $\ss(\delta) = [A \xrightarrow{x} B \xrightarrow{y} C]$. First take any $c \colon C' \to C$, then we show that $c^* \delta$ belongs to $\F(\SS)(C',A)$, that is, $\widetilde{c^* \delta} \in \SS$.
By Lemma \ref{lem:ln}, we have the following morphism of $\ss$-conflations
\[
\begin{tikzcd}
  A \rar["x'"] \dar[equal] & B' \rar["y'"]\dar["b"'] \ar[rd,phantom, "(*)"]& C' \dar["c"] \rar[dashed, "{c^* \delta}"] &\ \\
  A \rar["x"'] & B \rar["y"'] & C \rar[dashed, "\delta"'] & \
\end{tikzcd}
\]
such that $(*)$ is a weak pullback. Now by sending this diagram to $\Mod\CC$ under the Yoneda embedding, we obtain the following exact commutative diagram, where $L$ and $M$ are defects of $c^* \delta$ and $\delta$ respectively.
\[
\begin{tikzcd}
  \CC(-,A) \rar["{\CC(-,x')}"] \dar[equal] & \CC(-,B') \rar["{\CC(-,y')}"] \dar["{\CC(-,b)}"']  & \CC(-,C')\dar["{\CC(-,c)}"] \dar \rar & L \dar["\iota"] \rar & 0 \\
  \CC(-,A) \rar["{\CC(-,x)}"']  & \CC(-,B) \rar["{\CC(-,y)}"']  & \CC(-,C) \rar & M \rar & 0
\end{tikzcd}
\]
Since $\delta$ belongs to $\F(\SS)$, we have $M \in \SS$. On the other hand, since $(*)$ is a weak pullback, it is easily checked that $\iota$ is an injection. Now we have a short exact sequence
\[
\begin{tikzcd}
  0 \rar & L \rar["\iota"] & M \rar & \coker \iota \rar & 0
\end{tikzcd}
\]
in $\Mod\CC$. Recall that $L,M \in \ddef\E \subset \coh\CC$ by Proposition \ref{prop:defserre}.
Since $\coh\CC$ is closed under cokernels in $\Mod\CC$ by Proposition \ref{prop:cohwide}, we have $\coker \iota \in \coh\CC$. Therefore, since $\SS$ is a Serre subcategory of $\coh\CC$ and $M \in \SS$, we have $L \in \SS$.
This means $c^* \delta \in \F(\SS)(C',A)$.

Next take any $a \colon A \to A'$, then we show that $a_* \delta$ belongs to $\F(\SS)(C,A')$.
Then we have the following morphism of $\ss$-conflations which realizes $(a,\id_C) \colon \delta \to a_* \delta$.
\[
\begin{tikzcd}
  A \rar["x"]\dar["a"'] & B \rar["y"] \dar["b"] & C\dar[equal] \rar[dashed, "{\delta}"] & \ \\
  A' \rar["x'"'] & B' \rar["y'"']& C \rar[dashed, "a_* \delta"'] &\
\end{tikzcd}
\]
Now by sending this diagram to $\Mod\CC$ under the Yoneda embedding, we obtain the following exact commutative diagram, where $M$ and $N$ are defects of $\delta$ and $a_*\delta$ respectively.
\[
\begin{tikzcd}[column sep = large]
  \CC(-,A) \rar["{\CC(-,x)}"] \dar["{\CC(-,a)}"'] & \CC(-,B) \rar["{\CC(-,y)}"] \dar["{\CC(-,b)}"]  & \CC(-,C) \dar[equal] \rar & M \dar["\pi"] \rar & 0 \\
  \CC(-,A') \rar["{\CC(-,x')}"']  & \CC(-,B') \rar["{\CC(-,y')}"']  & \CC(-,C) \rar & N \rar & 0
\end{tikzcd}
\]
Then $\pi$ is a surjection. Now we have a short exact sequence
\[
\begin{tikzcd}
  0 \rar & \ker \pi \rar & M \rar["\pi"] & N \rar & 0
\end{tikzcd}
\]
in $\Mod\CC$. Recall that $M,N \in \ddef\E \subset \coh\CC$ by Proposition \ref{prop:defserre}.
Since $\coh\CC$ is closed under kernels in $\Mod\CC$ by Proposition \ref{prop:cohwide}, we have $\ker \pi \in \coh\CC$. Therefore, since $\SS$ is a Serre subcategory of $\coh\CC$ and $M \in \SS$, we have $N \in \SS$.
This means $a_* \delta \in \F(\SS)(C,A')$.

Now we have shown that $\F(\SS)$ is a subbifunctor of $\E$. $\qedb$

\un{{\bf (Step 2)} $\F(\SS)$ is an additive subbifunctor of $\E$.} It suffices to show that $\F(\SS)(C,A)$ is an additive subgroup of $\E(C,A)$ for each $A,C \in \CC$. Clearly $0 \in \F(\SS)(C,A)$.

Let $\delta_1,\delta_2 \in \F(\SS)(C,A)$. Then we have $\delta_1 -\delta_2 = \begin{sbmatrix} \id_C \\ -\id_C \end{sbmatrix}_* [\id_A \id_A]^* (\delta_1 \oplus \delta_2)$.
Thus by (Step 1), it suffices to show $\delta\oplus \rho \in \F(\SS)(C \oplus C,A \oplus A)$ to prove $\delta_1 - \delta_2 \in \F(\SS)(C,A)$.

Let $\ss(\delta_i) = [A \xrightarrow{x_i} B_i \xrightarrow{y_i} C]$ for $i=1,2$. Then we have
\[
\ss(\delta_1 \oplus \delta_2) = [ A\oplus A \xrightarrow{x_1 \oplus x_2} B_1 \oplus B_2 \xrightarrow{y_1 \oplus y_2} C \oplus C ]
\]
by the axiom of an extriangulated category, see \cite[Definition 2.10]{NP}. Therefore, $\widetilde{\delta_1 \oplus \delta_2} = \widetilde{\delta_1} \oplus \widetilde{\delta_2}$ holds, and we have  $\widetilde{\delta_1},\widetilde{\delta_2} \in \SS$ by $\delta_1,\delta_2 \in \F(\SS)(C,A)$.
Therefore, the defect of $\delta_1\oplus\delta_2$ belongs to $\SS$.
This means $\delta_1 \oplus \delta_2 \in \F(\SS)(C \oplus C, A \oplus A)$. $\qedb$

\un{{\bf (Step 3)} $\F(\SS)$ is a closed subbifunctor of $\E$.}
We will show that $\ss|_{\F(\SS)}$-deflations are closed under compositions.
Let $X \xrightarrow{x} Y$ and $Y \xrightarrow{y} Z$ be two $\ss|_{\F(\SS)}$-deflations. Then since these are $\ss$-deflations, their composition $X \xrightarrow{yx} Z$ is an $\ss$-deflation by the axiom (ET4) of extriangulated categories, see \cite[Definition 2.12]{NP}.
Now we obtain the following commutative diagram in $\Mod\CC$, and $L ,N \in \SS$ by the definition of $\F(\SS)$.
\[
\begin{tikzcd}
  \CC(-,X) \dar[equal] \rar["{\CC(-,x)}"] & \CC(-,Y) \dar["{\CC(-,y)}"]\rar & L\dar["f"] \rar & 0 \\
  \CC(-,X) \dar["{\CC(-,x)}"'] \rar["{\CC(-,yx)}"] & \CC(-,Z) \dar[equal]\rar & M \dar["g"] \rar & 0\\
  \CC(-,Y) \rar["{\CC(-,y)}"'] & \CC(-,Z) \rar & N \rar & 0
\end{tikzcd}
\]
Now the diagram chasing shows that the following is exact in $\Mod\CC$:
\[
\begin{tikzcd}
  0 \rar & \ker f \rar & L \rar["f"] & M \rar["g"] & N \rar & 0
\end{tikzcd}
\]
Now consider the following two exact sequene in $\Mod\CC$:
\[
\begin{tikzcd}[row sep = tiny]
  0 \rar & \ker f \rar & L \rar & \im f \rar & 0, \\
  0 \rar & \im f \rar & M \rar & N \rar & 0.
\end{tikzcd}
\]
We have $L,M,N \in \ddef\E \subset \coh\CC$. Thus all the terms belongs to $\coh\CC$ by Proposition \ref{prop:cohwide}.
Since $\SS$ is a Serre subcategory of $\coh\CC$ and $L,N \in \SS$, the first exact sequence implies $\im f \in \SS$, and the second implies $M\in\SS$. This means that $yx$ is an $\ss|_{\F(\SS)}$-deflation. $\qedb$

Therefore, we obtain a map $\F(-)$ from (2) to (1). It is obvious that maps $\F(-)$ and $\ddef(-)$ are order-preserving. We are going to prove that these maps are mutually inverse to each other.

\un{$\ddef \F(\SS) = \SS$ holds for a Serre subcategory $\SS$ of $\ddef\E$.}

Let $M \in \ddef \F(\SS)$. Then $M$ is isomorphic to $\widetilde{\delta}$ for some $\delta \in \F(\SS)(C,A)$ by the definition of $\ddef(-)$. On the other hand, by the definition of $\F(\SS)$, we have $\widetilde{\delta} \in \SS$. Thus $M$ belongs to $\SS$.

Conversely, let $M \in \SS$. Then since $\SS \subset \ddef\E$, we have that $M$ is isomorphic to $\widetilde{\delta}$ for some $\delta \in \E(C,A)$. This implies that $\widetilde{\delta}$ belongs to $\SS$, thus $\delta \in \F(\SS)(C,A)$. Therefore, we have $M \iso \widetilde{\delta}\in \ddef \F(\SS)$.

\un{$\F(\ddef \F) = \F$ holds for a closed bifunctor $\F$ of $\E$.}

By definition, $\F(C,A) \subset \F(\ddef\F)(C,A)$ can be easily checked.
The converse direction is the most complicated part of the proof. The key observation is the following lemma.
\begin{lemma}\label{lem:key}
  Let $(\CC,\E,\ss)$ be an extriangulated category and $\F$ a closed subbifunctor of $\E$. Suppose that we have two elements $\delta \in \E(C,A)$ and $\delta' \in \F(C',A')$. If $\widetilde{\delta}$ and $\widetilde{\delta'}$ are isomorphic, then $\delta \in \F(C,A)$ holds.
\end{lemma}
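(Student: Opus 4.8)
The plan is to read off, from a defect, the subfunctor of $\CC(-,C)$ consisting of the morphisms that factor through the deflation. If $A\xrightarrow{x}B\xrightarrow{y}C$ realizes $\delta$, then the kernel of the canonical surjection $p\colon\CC(-,C)\defl\widetilde\delta=\coker\CC(-,y)$ is exactly $\im\CC(-,y)$, i.e. the morphisms into $C$ that factor through $y$ (equivalently, by Proposition~\ref{prop:longexact}, $\ker\big((\delta)_\sharp\big)$). Hence an isomorphism of defects that is \emph{compatible with the canonical surjections out of a common representable} translates into an equality of these ``factoring'' subfunctors, and therefore into the two deflations factoring through one another. The obstacle is that $\delta$ and $\delta'$ may live over different objects $C\neq C'$, and the given $\phi\colon\widetilde{\delta'}\iso\widetilde\delta$ is only an abstract isomorphism of $\CC$-modules carrying no such compatibility.

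To remove this obstacle I would first replace $\delta'$ by a pullback over $C$. Writing $p'\colon\CC(-,C')\defl\widetilde{\delta'}$ for the canonical surjection, I lift $\phi^{-1}\circ p\colon\CC(-,C)\to\widetilde{\delta'}$ along $p'$ using projectivity of $\CC(-,C)$, obtaining $c\colon C\to C'$ with $\phi\circ p'\circ\CC(-,c)=p$. Set $\epsilon:=c^*\delta'$; since $\F$ is a subbifunctor, $\epsilon\in\F(C,A')$. By Lemma~\ref{lem:ln} and the weak-pullback computation already carried out in (Step~1), the induced map on defects $\iota\colon\widetilde\epsilon\to\widetilde{\delta'}$ is a monomorphism satisfying $\iota\circ q=p'\circ\CC(-,c)$, where $q\colon\CC(-,C)\defl\widetilde\epsilon$ is canonical. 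Then $\phi\circ\iota\circ q=p$ is surjective while $\phi\circ\iota$ is a monomorphism, so $\theta:=\phi\circ\iota\colon\widetilde\epsilon\iso\widetilde\delta$ is an isomorphism with $\theta\circ q=p$. This is precisely the compatibility sought, now over the single object $C$.

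It then remains a short extriangulated argument. From $\theta\circ q=p$ with $\theta$ an isomorphism we get $\ker q=\ker p$, that is $\im\CC(-,y'')=\im\CC(-,y)$ in $\CC(-,C)$, where $y\colon B\to C$ and $y''\colon B''\to C$ are the deflations realizing $\delta$ and $\epsilon$. In particular $y''$ factors through $y$, say $y''=y\circ h$. Feeding the square $(h,\id_C)$ into axiom (ET3)$^{\op}$ produces $a'\colon A'\to A$ with $(a',\id_C)\colon\epsilon\to\delta$ realized, whence $a'_*\epsilon=(\id_C)^*\delta=\delta$. Finally, since $\F$ is a subbifunctor and $\epsilon\in\F(C,A')$, we conclude $\delta=a'_*\epsilon\in\F(C,A)$. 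I expect the compatibility step of the previous paragraph — upgrading the abstract isomorphism $\phi$ to one respecting the canonical presentations over a common object — to be the main difficulty; once it is in place, the passage through (ET3)$^{\op}$ and the subbifunctoriality of $\F$ is routine.
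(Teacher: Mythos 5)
Your proof is correct, and it takes a genuinely different route from the paper's. The paper lifts the comparison in the opposite direction: projectivity of representables yields $c\colon C'\to C$ and $b\colon B'\to B$, (ET3)$^{\op}$ supplies $a\colon A'\to A$ with $c^*\delta=a_*\delta'$, and Lemma \ref{lem:tec} interpolates a conflation realizing $a_*\delta'\in\F(C',A)$ whose defect is shown to be $M$ again; the conclusion is then extracted via Proposition \ref{prop:longexact}, by identifying $M$ simultaneously with the image of $(a_*\delta')_\sharp$ (which lands in $\F(-,A)$ because $a_*\delta'\in\F(C',A)$) and with the image of $\delta_\sharp$, and evaluating at $\id_C$. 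Your reversal of the comparison map --- lifting $\phi^{-1}\circ p$ to get $c\colon C\to C'$ and pulling $\delta'$ back to $\epsilon=c^*\delta'\in\F(C,A')$ over the \emph{same} base $C$ --- buys you a shorter endgame that never leaves the representable level: once the two canonical presentations of $\widetilde{\epsilon}\iso\widetilde{\delta}$ out of $\CC(-,C)$ are matched, equality of kernels gives the factorization $y''=yh$ of deflations, and a single application of (ET3)$^{\op}$ plus subbifunctoriality yields $\delta=a'_*\epsilon\in\F(C,A)$, with no need for Lemma \ref{lem:tec}, Proposition \ref{prop:longexact}, or the bifunctor $\E(-,A)$ at all. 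One small simplification of your own argument: the injectivity of $\iota$ (and hence the weak-pullback input from Lemma \ref{lem:ln}) is not actually needed, since the inclusion $\ker q\subseteq\ker p$ --- which is all that the factorization $y''=yh$ requires --- already follows from the identity $\theta\circ q=p$ irrespective of whether $\theta$ is monic.
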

\begin{proof}
  Let $\ss(\delta) =[A \xrightarrow{x} B \xrightarrow{y} C]$ and $\ss(\delta') = [A' \xrightarrow{x'} B' \xrightarrow{y'} C']$. By assumption, we have the following commutative exact diagram in $\Mod\CC$.
  \[
  \begin{tikzcd}
    \CC(-,A') \rar["{\CC(-,x')}"] & \CC(-,B')\dar[dashed] \rar["{\CC(-,y')}"] & \CC(-,C') \rar\dar[dashed] & M \dar[equal] \rar & 0 \\
    \CC(-,A) \rar["{\CC(-,x)}"'] & \CC(-,B) \rar["{\CC(-,y)}"'] & \CC(-,C) \rar & M  \rar & 0 \\
  \end{tikzcd}
  \]
  Then there are dashed arrows which make the diagram commute by the projectivity of representable functors. By the Yoneda lemma, we obtain the following commutative diagram in $\CC$:
  \[
  \begin{tikzcd}
    A' \rar["x'"]\dar[dashed, "a"'] & B' \rar["y'"]\dar["b"] & C' \dar["c"] \rar[dashed, "\delta'"] & \ \\
    A \rar["x"'] & B \rar["y"'] & C \rar[dashed, "\delta"'] & \
  \end{tikzcd}
  \]
  Now by (ET3)$^{\op}$, there exists a dashed map $a$ such that the above diagram commutes and $c^* \delta = a_* \delta'$. Then a morphism $(c,a) \colon\delta' \to \delta$ is realized by $(a,b,c)$. Thus by Lemma \ref{lem:tec}, we obtain the following morphisms of $\ss$-conflations.
  \[
  \begin{tikzcd}
    A' \rar["x'"]\dar["a"'] & B' \rar["y'"]\dar["b_1"] & C' \dar[equal] \rar[dashed, "\delta'"] & \ \\
    A \rar \dar[equal] & D \rar \dar["b_2"] & C' \dar["c"] \rar[dashed, "a_* \delta'"] & \ \\
    A \rar["x"'] & B \rar["y"'] & C \rar[dashed, "\delta"'] & \
  \end{tikzcd}
  \]
  Again by the Yoneda embedding, we obtain the following exact commutative diagram.
  \[
  \begin{tikzcd}
    \CC(-,A') \rar["{\CC(-,x')}"]\dar["{\CC(-,a)}"'] & \CC(-,B') \dar["{\CC(-,b_1)}"]\rar["{\CC(-,y')}"] & \CC(-,C') \rar\dar[equal] & M \dar["f"'] \ar[dd,bend left = 40, equal] \rar & 0 \\
    \CC(-,A) \dar[equal]\rar & \CC(-,D) \dar["{\CC(-,b_2)}"]\rar & \CC(-,C') \rar\dar["{\CC(-,c)}"] & N \dar["g"']  \rar & 0 \\
    \CC(-,A) \rar["{\CC(-,x)}"'] & \CC(-,B) \rar["{\CC(-,y)}"'] & \CC(-,C) \rar & M  \rar & 0 \\
  \end{tikzcd}
  \]
  Here $gf = \id_M$ holds by the surjectivity of $\CC(-,C') \to M$, thus $f$ is injective.
  On the other hand, $f$ is surjective by the above commutative diagram, hence $f$ is an isomorphism. Thus so is $g$. Therefore we may assume that $g = \id_M$, and obtain the following exact commutative diagram by Proposition \ref{prop:longexact}.
  \[
  \begin{tikzcd}
    \CC(-,A) \ar[dd,equal]\rar & \CC(-,D) \ar[dd,"{\CC(-,b_2)}"']\rar & \CC(-,C') \ar[rr, "{(a_* \delta')_\sharp}"]\ar[dd, "{\CC(-,c)}"'] \ar[rd,twoheadrightarrow] & & \E(-,A) \ar[dd,equal] \\
    & & & M \ar[ru,hookrightarrow] \ar[rd,hookrightarrow]\\
    \CC(-,A) \rar["{\CC(-,x)}"'] & \CC(-,B) \rar["{\CC(-,y)}"'] & \CC(-,C) \ar[ru,twoheadrightarrow]\ar[rr,"{\delta_\sharp}"'] & & \E(-,A) \\
  \end{tikzcd}
  \]
  Recall that $\delta'$ belongs to $\F(C',A')$. Thus $a_* \delta'$ belongs to $\F(C',A)$ since $\F$ is a subbifunctor of $\E$. Then we claim that the image of the map $(a_* \delta')_\sharp \colon \CC(-,C') \to \E(-,A)$, which is isomorphic to $M$, is actually contained in $\F(-,A)$.
  Indeed, let $X \in \CC$ and evaluate $(a_* \delta')_\sharp$ at $X$. Then we obtain a map $\CC(X,C') \to \E(X,A)$ which sends $\varphi$ to $\varphi^* (a_* \delta')$. Now $a_* \delta' \in \F(C',A)$ implies $\varphi^* (a_* \delta') \in \F(X,A)$.

  Therefore, we obtain the following commutative diagram.
  \[
  \begin{tikzcd}
     \CC(-,C') \ar[rr, "{(a_* \delta')_\sharp}"]\ar[dd, "{\CC(-,c)}"'] \ar[rd,twoheadrightarrow] & & \F(-,A) \ar[dd,hookrightarrow] \\
     & M \ar[ru,hookrightarrow] \ar[rd,hookrightarrow]\\
     \CC(-,C) \ar[ru,twoheadrightarrow]\ar[rr,"{\delta_\sharp}"'] & & \E(-,A) \\
  \end{tikzcd}
  \]
  This implies that the image of $\delta_\sharp \colon \CC(-,C) \to \E(-,A)$ is contained in $\F(-,A)$. Now by evaluating this map at $C$, we obtain a map $(\delta_\sharp)_C \colon \CC(C,C) \to \E(C,A)$, whose image is contained in $\F(C,A)$. Therefore, $\delta = (\delta_\sharp)_C (\id_C)$ belongs to $\F(C,A)$.
\end{proof}

Now let us return to the proof of our main theorem.
Take $\delta \in \F(\ddef\F)(C,A)$. Then $\widetilde{\delta}$ belongs to $\ddef\F$ by the definition of $\F(\ddef\F)(C,A)$. Therefore, there exists some $\delta' \in \F(C',A')$ such that $\widetilde{\delta'}$ is isomorphic to $\widetilde{\delta}$.
Then Lemma \ref{lem:key} implies that $\delta \in \F(C,A)$. Therefore, we obtain $\F(\SS)(C,A) \subset \F(C,A)$.
This finishes the proof of Theorem \ref{thm:main}.

\section{Classifications of exact structures revisited}\label{sec4}
In this section, we apply Theorem \ref{thm:main} to study exact structures.
First let us recall basics on exact categories and their relation to extriangulated categories.
Let $\CC$ be an additive category. An \emph{exact category} consists of a pair $(\CC,\EE)$, where $\CC$ is an additive category and $\EE$ is a class of kernel-cokernel pairs in $\CC$ satisfying some axioms. We call $\EE$ an \emph{exact structure} on $\CC$.
For the precise definition and basic properties of exact categories, we refer the reader to \cite{buhler}.

A skeletally small exact category $(\CC,\EE)$ can be regarded as an extriangulated category in the following way:
For $A,C \in \CC$, put $\E_\EE(C,A):= \Ext_\EE^1(C,A)$, where $\Ext_\EE^1(C,A)$ is the set of equivalence classes of complexes in $\EE$ of the form $A \to B \to C$. Then $\E_\EE$ becomes an additive bifunctor $\E_\CC(C,A) \colon \CC^{\op} \times \CC \to \Ab$.
Define the realization $\ss_\EE(\delta)$ of $\delta \in \E_\EE$ to be $\delta$ itself. Then this data $(\E_\EE,\ss_\EE)$ defines an extriangulated structure on $\CC$, see \cite[Example 2.13]{NP} for the detail. Clearly, this operation gives an injection from the set of exact structures on $\CC$ to the set of extriangulated structures on $\CC$.
We have the following characterization of its image.
\begin{proposition}[{\cite[Corollary 3.18]{NP}}]\label{prop:exchar}
  Let $(\CC,\E,\ss)$ be an extriangulated category. Then the following are equivalent.
  \begin{enumerate}
    \item There is an exact structure $\EE$ on $\CC$ satisfying $(\E,\ss) = (\E_\EE,\ss_\EE)$.
    \item Every $\ss$-inflation is a monomorphism, and every $\ss$-deflation is an epimorphism.
  \end{enumerate}
\end{proposition}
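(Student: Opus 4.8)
The implication (1) $\Rightarrow$ (2) is immediate from the definitions: if $(\E,\ss)=(\E_\EE,\ss_\EE)$ for an exact structure $\EE$, then every $\ss$-conflation is a kernel--cokernel pair, so every $\ss$-inflation is a kernel (hence a monomorphism) and every $\ss$-deflation is a cokernel (hence an epimorphism). The content lies in (2) $\Rightarrow$ (1), and my plan is to take as candidate exact structure $\EE$ the class of \emph{all} $\ss$-conflations, and then to check successively that (a) each such conflation is a kernel--cokernel pair, (b) $\EE$ satisfies the axioms of an exact structure, and (c) the resulting pair $(\E_\EE,\ss_\EE)$ is canonically identified with $(\E,\ss)$.

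For (a), I fix an $\ss$-conflation $A \xrightarrow{x} B \xrightarrow{y} C$ realizing $\delta$. Applying Proposition \ref{prop:longexact} to the identity morphism of this conflation yields exactness of $\CC(-,A) \to \CC(-,B) \to \CC(-,C)$, so every $t\colon T\to B$ with $yt=0$ factors as $t=xs$; since $x$ is a monomorphism by (2), the factorization is unique, giving $x=\ker y$. Working in the opposite extriangulated category, the dual long exact sequence $\CC(C,-)\to\CC(B,-)\to\CC(A,-)$ shows, using that $y$ is an epimorphism, that $y=\coker x$. Hence each $\ss$-conflation is a kernel--cokernel pair.

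Step (b) is the most laborious but, I expect, entirely routine: the closure of $\ss$-inflations and $\ss$-deflations under composition is exactly the content of (ET4)$^{\op}$ and (ET4), while the existence of pushouts of inflations along arbitrary maps and of pullbacks of deflations (and the fact that these are again inflations, resp. deflations) follows from the bifunctoriality of $\E$ together with (ET3) and its dual. Combined with (a), these translate directly into the axioms of an exact category (see \cite{buhler}), so no genuinely new input beyond the extriangulated axioms is needed here.

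The decisive step is (c). By definition $\E_\EE(C,A)=\Ext^1_\EE(C,A)$ is the set of equivalence classes of $\ss$-conflations of the form $A\to B\to C$, and the assignment $\delta\mapsto[\ss(\delta)]$ gives a surjection $\E(C,A)\to\E_\EE(C,A)$. The crux is its \emph{injectivity}, and this is precisely where the monomorphism hypothesis becomes indispensable. Suppose $\delta,\delta'\in\E(C,A)$ have equivalent realizations via an isomorphism $b\colon B\to B'$. Applying (ET3)$^{\op}$ to the right-hand square of the equivalence produces a morphism $a\colon A\to A$ with $x'a=bx=x'$ and with $(a,\id_C)\colon\delta\to\delta'$; since $x'$ is a monomorphism, this forces $a=\id_A$, whence $\delta=a_*\delta=(\id_C)^*\delta'=\delta'$. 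It then remains to verify that this bijection is natural in both variables and carries the Baer sum to the bifunctor structure of $\E$, and that $\ss_\EE$ (the identity on equivalence classes) corresponds to $\ss$; both are straightforward once injectivity is known. The main obstacle is therefore not any single hard computation but the careful bookkeeping in (b), together with the observation that the monomorphism/epimorphism hypotheses of (2) enter in exactly two places: to upgrade the Yoneda exactness of Proposition \ref{prop:longexact} into honest kernel--cokernel pairs, and to make the realization map injective.
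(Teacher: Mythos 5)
The paper gives no proof of this proposition: it is imported directly from \cite[Corollary 3.18]{NP}, so there is no internal argument to compare against. Your proposal is a correct reconstruction of the expected proof. The implication (1) $\Rightarrow$ (2) is indeed immediate; for the converse your three-step plan is sound, and the two points you isolate as the real content --- upgrading the Yoneda-level exactness of Proposition \ref{prop:longexact} to honest kernel--cokernel pairs, and proving injectivity of $\delta \mapsto \ss(\delta)$ onto $\Ext^1_{\EE}(C,A)$ via (ET3)$^{\op}$ and the monicity of $x'$ --- are exactly the places where hypothesis (2) does essential work. Two small corrections to your bookkeeping. First, the hypotheses of (2) enter in a \emph{third} place you did not list: to verify the pushout/pullback axioms of an exact category one must promote the \emph{weak} pullback of Lemma \ref{lem:ln} to a genuine pullback, and the uniqueness of the induced map uses exactness of $\CC(-,A)\to\CC(-,B')\to\CC(-,C')$ together with $x$ being monic (any difference $d$ of two fill-ins satisfies $y'd=0$, hence $d=x's$, and then $xs=bd=0$ forces $s=0$); dually for pushouts of inflations. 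Second, an exact structure must be closed under isomorphisms of kernel--cokernel pairs in which all three terms may vary, whereas the equivalence relation built into $\ss$ fixes the end terms; one transports a conflation along isomorphisms $\alpha\colon A\to A'$ and $\gamma\colon C'\to C$ by realizing $\E(\gamma,\alpha)(\delta)$ and applying (ET3) and (ET3)$^{\op}$ once more. Neither point is a genuine gap --- both belong to the ``routine'' step (b) you deferred --- but they should be acknowledged if the argument is written out in full.
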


We have the following compatibility on closed subbifunctors and exact structures.
\begin{lemma}\label{lem:compati}
  Let $(\CC,\EE)$ be a skeletally small exact category. Then the map $\FF \mapsto \E_\FF$ gives an isomorphism of posets between the following two posets, where poset structures are given by inclusion.
  \begin{enumerate}
    \item The poset of exact structures which is contained in $\EE$.
    \item The poset of closed subbifunctors of $\E_\EE$.
  \end{enumerate}
\end{lemma}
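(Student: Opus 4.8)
The statement to prove is Lemma \ref{lem:compati}: for a skeletally small exact category $(\CC,\EE)$, the assignment $\FF \mapsto \E_\FF$ is a poset isomorphism between exact structures contained in $\EE$ and closed subbifunctors of $\E_\EE$. My strategy would be to show that the map is \emph{well-defined}, \emph{order-preserving in both directions}, and \emph{bijective}, by exhibiting a mutually inverse construction. The key conceptual point is that, once we regard $(\CC,\EE)$ as the extriangulated category $(\CC,\E_\EE,\ss_\EE)$, both sides of the bijection live inside the same ambient bifunctor $\E_\EE$, and an exact substructure and a closed subbifunctor are just two packagings of the same data --- namely a choice of which conflations to keep.

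**Main steps.**
First I would check the map is well defined: given an exact structure $\FF \subseteq \EE$, the associated bifunctor $\E_\FF$ is a subbifunctor of $\E_\EE$ (since every $\FF$-conflation is an $\EE$-conflation), it is additive (Baer sum of $\FF$-conflations is again an $\FF$-conflation, as $\FF$ is exact), and it is \emph{closed} because $\ss_\EE|_{\E_\FF}$-deflations are precisely the $\FF$-deflations, which are closed under composition by the axioms of an exact structure. By Proposition \ref{prop:closed} this exactly says $\E_\FF$ is a closed subbifunctor of $\E_\EE$. Second, I would construct the inverse: given a closed subbifunctor $\FF'$ of $\E_\EE$, the pair $(\FF',\ss_\EE|_{\FF'})$ is an extriangulated structure on $\CC$ (Proposition \ref{prop:closed} again). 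To recover an exact structure I would invoke Proposition \ref{prop:exchar}: I must verify that every $\ss_\EE|_{\FF'}$-inflation is a monomorphism and every such deflation is an epimorphism. This is immediate, since $\ss_\EE|_{\FF'}$-inflations (resp.\ deflations) are in particular $\ss_\EE$-inflations (resp.\ deflations), and these are monic (resp.\ epic) because $(\CC,\EE)$ came from an honest exact structure, where inflations are kernels and deflations are cokernels. Hence $\FF'$ arises as $\E_{\GG}$ for a unique exact structure $\GG$, and $\GG \subseteq \EE$ because $\FF' \subseteq \E_\EE$. Third, I would observe that the two constructions are mutually inverse essentially by unwinding definitions --- an $\FF$-conflation is the same datum as an element of $\E_\FF$ together with its realization --- and that both are order-preserving, since inclusion of exact structures corresponds termwise to inclusion of subbifunctors.

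**The main obstacle.**
The only genuinely delicate point, and the step I would spend the most care on, is confirming that the additivity and the ``kernel--cokernel'' features survive the translation in both directions: specifically that a closed subbifunctor $\FF'$ really does yield a \emph{class of kernel--cokernel pairs closed under the exact-category axioms}, rather than merely an extriangulated substructure. This is where Proposition \ref{prop:exchar} does the essential work, reducing the verification to the monic/epic property of $\ss_\EE$-inflations and deflations, which is inherited for free from $\EE$ being exact. I expect everything else to be a routine bookkeeping of definitions; the content is entirely in matching $\ss_\EE|_{\FF'}$-deflations with compositions and in applying the characterization of exact extriangulated structures. No weak-kernel or coherence hypotheses enter here, so the lemma is purely formal once Propositions \ref{prop:closed} and \ref{prop:exchar} are in hand.
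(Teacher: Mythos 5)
Your proposal is correct and follows essentially the same route as the paper: the forward map is checked to land in closed subbifunctors via the exact-category axioms, and surjectivity is obtained by applying Proposition \ref{prop:closed} to get an extriangulated structure $(\F,\ss_\EE|_{\F})$ and then Proposition \ref{prop:exchar}, with the monomorphism/epimorphism condition inherited from $\EE$. No substantive difference from the paper's argument.
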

\begin{proof}
  Denote by $(\E_\EE,\ss)$ be an extriangulated structure on $\CC$ corresponding to $\EE$.
  Let $\FF$ be an exact structure on $\CC$ with $\FF \subset \EE$, and  Then clearly we have $\E_\FF = \Ext_\FF^1(-,-)$ is a closed subbifunctor of $\E_\EE = \Ext_\EE^1(-,-)$, thus we obtain a map $\FF \mapsto \E_\FF$ from (1) to (2). Clearly this map is order-preserving, and induces an embedding of posets (1) into (2).
  Therefore, it suffices to show the following claim:

  {\bf (Claim)}: Every closed subbifunctor of $\E_\EE$ is of the form $\E_\FF$ for some exact structure $\FF$ on $\CC$ satisfying $\FF \subset \EE$.

  Let $\F$ be a closed subbifunctor of $\E_\EE$, and consider the extriangulated structure $(\F,\ss|_{\F})$ on $\CC$. Since every $\ss|_{\F}$-inflation (resp. $\ss|_{\F}$-deflation) is an $\ss$-inflation (resp. an $\ss$-deflation), it is a monomorphism (resp. an epimorphism).
  Therefore, Proposition \ref{prop:exchar} implies that there is an exact structure $\FF$ on $\CC$ satisfying $\F = \E_\FF$ and $\ss|_{\F} = \ss_\FF$.
  In addition, since we have $\Ext_\FF^1 = \F \subset \E = \Ext_\EE^1$ and $\ss_\FF$ coincides with the restriction of $\ss$, we obtain $\FF \subset \EE$.
\end{proof}

By abuse of notation, for an exact structure $\EE$ on a skeletally small additive category $\CC$, we put $\ddef\EE := \ddef \E_{\EE}$, that is, $\ddef\EE$ is a subcategory of $\Mod\CC$ consisting of $M$ such that there is an $\EE$-deflation $B \xrightarrow{y} C$ wich $M \iso \coker \CC(-,y)$. This category was used in \cite{eno} to classify all possible exact structures.
It is a Serre subcategory of an abelian category $\coh\EE$ by Proposition \ref{prop:defserre}, thus is an abelian category.

As we mentioned in the introduction, every additive category $\CC$ admits a unique maximal exact structure $\EE^{\max}$ which contains all exact structures on $\CC$ \cite{rump}.
Then we immediately obtain the following corollary.
\begin{corollary}\label{cor:exact}
  Let $\CC$ be a skeletally small additive category. Then there is an isomorphism of the following posets:
  \begin{enumerate}
    \item The poset of exact structures on $\CC$.
    \item The poset of Serre subcategories of an abelian category $\ddef\EE^{\max}$.
  \end{enumerate}
  The map from {\upshape (1)} to {\upshape (2)} is given by $\FF\mapsto \ddef\FF$.
\end{corollary}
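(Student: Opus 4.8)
**The plan is to derive Corollary \ref{cor:exact} by chaining together the poset isomorphisms already established, using the maximal exact structure $\EE^{\max}$ as the fixed ambient structure.** First I would observe that since $\EE^{\max}$ is an exact structure on the skeletally small additive category $\CC$, it gives rise to an extriangulated structure $(\E_{\EE^{\max}}, \ss_{\EE^{\max}})$ on $\CC$ via the correspondence recalled at the start of Section \ref{sec4}. By Proposition \ref{prop:defserre}, the category $\ddef\EE^{\max} := \ddef\E_{\EE^{\max}}$ is a Serre subcategory of $\coh\CC$, hence is itself an abelian category, so the statement of (2) makes sense.

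The core of the argument is to compose two previously established isomorphisms. On one hand, Lemma \ref{lem:compati} applied to $\EE = \EE^{\max}$ gives an isomorphism between the poset of exact structures contained in $\EE^{\max}$ and the poset of closed subbifunctors of $\E_{\EE^{\max}}$, via $\FF \mapsto \E_\FF$. Since $\EE^{\max}$ contains \emph{every} exact structure on $\CC$ by Rump's theorem \cite{rump}, the poset of exact structures contained in $\EE^{\max}$ is simply the poset of \emph{all} exact structures on $\CC$, which is exactly poset (1) of the corollary. On the other hand, Theorem \ref{thm:main} applied to the extriangulated category $(\CC, \E_{\EE^{\max}}, \ss_{\EE^{\max}})$ gives an isomorphism between the poset of closed subbifunctors of $\E_{\EE^{\max}}$ and the poset of Serre subcategories of $\ddef\E_{\EE^{\max}} = \ddef\EE^{\max}$, via $\F \mapsto \ddef\F$. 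Composing these two isomorphisms yields the desired isomorphism between posets (1) and (2).

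Finally I would verify that the composite map is the one claimed, namely $\FF \mapsto \ddef\FF$. Tracing an exact structure $\FF$ through the composition, Lemma \ref{lem:compati} sends $\FF$ to its closed subbifunctor $\E_\FF$, and Theorem \ref{thm:main} then sends $\E_\FF$ to its defect category $\ddef\E_\FF$. By the abuse-of-notation convention introduced just before the corollary, $\ddef\FF$ \emph{is} by definition $\ddef\E_\FF$, so the composite is precisely $\FF \mapsto \ddef\FF$, as asserted. Since a composition of order-preserving bijections whose inverses are order-preserving is again such an isomorphism, this completes the proof.

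**The main obstacle is essentially bookkeeping rather than substance:** both of the heavy lifting steps (Lemma \ref{lem:compati} and Theorem \ref{thm:main}) are already proven, so the only real point requiring care is the invocation of Rump's theorem to identify ``exact structures contained in $\EE^{\max}$'' with ``all exact structures,'' together with checking that the two named maps genuinely compose to $\FF \mapsto \ddef\FF$ under the identification $\ddef\FF = \ddef\E_\FF$. I would take a moment to confirm that $\EE^{\max}$ is itself among the exact structures being classified (so that the top of poset (1) corresponds to the whole defect category $\ddef\EE^{\max}$, i.e. the improper Serre subcategory), which is a sanity check that the endpoints of the two posets match up correctly.
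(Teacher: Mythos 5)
Your proposal is correct and follows exactly the paper's own argument: compose Lemma \ref{lem:compati} applied to $\EE^{\max}$ (using Rump's theorem to identify exact structures contained in $\EE^{\max}$ with all exact structures) with Theorem \ref{thm:main} applied to $(\CC,\E_{\EE^{\max}},\ss_{\EE^{\max}})$. The additional verification that the composite is $\FF\mapsto\ddef\FF$ is a harmless elaboration of what the paper leaves implicit.
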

\begin{proof}
  Since every exact structure is contained in $\EE^{\max}$, a map $\FF \mapsto \E_\FF$ gives an isomorphism of posets between (1) and the poset of closed subbifunctors of $\E_{\EE^{\max}}$ by Lemma \ref{lem:compati}. Then Theorem \ref{thm:main} shows that the latter poset is isomorphic to (2).
\end{proof}
\begin{remark}
  Note that in \cite{eno}, we assume that $\CC$ is an idempotent complete so that $\Mod\CC$ behaves nicer, but in this paper we do not need this assumption. Actually, Corollary \ref{cor:exact} cannot deduce classification results in \cite{eno} since we do not know an explicit structure of $\EE^{\max}$ or $\ddef\EE^{\max}$. However, our description is a simpler at least in the theoretical sense, and its proof quickly follows from our classification of subbifunctors, with the aid of existence of $\EE^{\max}$ and the well-developed theory of extriangulated categories.
\end{remark}

\begin{remark}
  Recently, Corollary \ref{cor:exact} was shown by Fan-Gorsky \cite[Theorem 2.8]{FG} for the idempotent complete case by using results in \cite{eno}.
  Moreover, they discuss possible extriangulated structures on an additive category in \cite[Section 7]{FG} as an announcement of a forthcoming paper. Actually, the preparation of this paper is partly motivated by their discussion, but the author obtained the main result Theorem \ref{thm:main} independently before their paper.
\end{remark}

\begin{ack}
  The author would like to thank Hiroyuki Nakaoka for helpful discussions and his intensive lecture on extriangulated categories at Nagoya University, which strongly motivates this paper. This work is supported by JSPS KAKENHI Grant Number JP18J21556.
\end{ack}

\end{document}